\DeclarePairedDelimiter{\ceil}{\lceil}{\rceil}
\DeclareMathOperator{\Spec}{Spec}
\DeclareMathOperator{\td}{tr.deg}
\DeclareMathOperator{\rr}{rat.rk}
\theoremstyle{plain}
\newtheorem{theorem}[subsubsection]{Theorem}
\newtheorem{proposition}[subsubsection]{Proposition}
\newtheorem{lemma}[subsubsection]{Lemma}
\newtheorem{corollary}[subsubsection]{Corollary}
\newtheorem{claim}[subsubsection]{Claim}
\theoremstyle{definition}
\newtheorem{definition}[subsubsection]{Definition}
\newtheorem{example}[subsubsection]{Example}
\newtheorem{examples}[subsubsection]{Examples}
\newtheorem{remark}[subsubsection]{Remark}
\newtheorem{remarks}[subsubsection]{Remarks}
\newtheorem{setup}[subsubsection]{Setup}
\theoremstyle{definition}
\theoremstyle{plain}
\newenvironment{customthm}[1]
  {\innercustomthm}
  {\endinnercustomthm}
\theoremstyle{plain}  
\newenvironment{customcor}[1]
  {\innercustomcor}
  {\endinnercustomcor}
\numberwithin{equation}{subsubsection}
\numberwithin{equation}{subsubsection}
\def\@tocline#1#2#3#4#5#6#7{\relax
  \ifnum #1>\c@tocdepth 
  \else
    \par \addpenalty\@secpenalty\addvspace{#2}%
    \begingroup \hyphenpenalty\@M
    \@ifempty{#4}{%
      \@tempdima\csname r@tocindentvmber#1\endcsname\relax
    }{%
      \@tempdima#4\relax
    }%
    \parindent\z@ \leftskip#3\relax \advance\leftskip\@tempdima\relax
    \rightskip\@pnumwidth plus4em \parfillskip-\@pnumwidth
    #5\leavevmode\hskip-\@tempdima
      \ifcase #1
       \or\or \hskip 1em \or \hskip 2em \else \hskip 3em \fi%
      #6\nobreak\relax
    \dotfill\hbox to\@pnumwidth{\@tocpagenum{#7}}\par
    \nobreak
    \endgroup
  \fi}
\title{Uniform approximation of Abhyankar valuation ideals in function fields of prime characteristic}
\author{Rankeya Datta\vspace{-2ex}}
\address{Department of Mathematics, Statistics and Computer Science, University of Illinois at Chicago, Chicago, IL 60607, USA}
\email{rankeya@uic.edu}
\subjclass[2010]{Primary 13A35, 13A18; Secondary 14B05, 13F30, 14G17} 
\thanks{The author's work was supported by a department fellowship at University of Michigan and Karen Smith's NSF grant DMS $\# 1501625$. Preliminary results were obtained during the author's stay at the University of Utah, and he is grateful to Karl Schwede for the invitation.
}
\begin{document}

\maketitle
\begin{abstract}
We prove the prime characteristic analogue of a characteristic $0$ result of Ein, Lazarsfeld and Smith \cite{ELS03} on uniform approximation of valuation ideals associated to real-valued Abhyankar valuations centered on regular varieties.
\end{abstract}

Let $X$ be a variety over a perfect field $k$ of prime characteristic $p > 0$, with function field $K$. Suppose $\nu$ is a real-valued valuation of $K/k$ centered on $X$. Then for all $m \in \mathbb{R}$, we have the \emph{valuation ideals} 
$$\mathfrak{a}_m(X) \subseteq \mathcal{O}_X,$$ 
consisting of local sections $f$ such that $\nu(f) \geq m$. When $X = \text{Spec}(A)$, we denote the ideal $\{a \in A: \nu(a) \geq m\}$ of $A$ by $\mathfrak{a}_m(A)$.

The goal of this paper is to use the theory of asymptotic test ideals  in positive characteristic to prove the following uniform approximation result for Abhyankar valuation ideals established in the characteristic $0$ setting by Ein, Lazarsfeld and Smith \cite{ELS03}. 

\begin{customthm}{A}
\label{Theorem-A}
Let $X$ be a regular variety over a perfect field $k$ of prime characteristic with function field $K$. For any nontrivial, real-valued Abhyankar valuation $\nu$ of $K/k$ centered on $X$, there exists $e \geq 0$ such that, for all $m \in \mathbb R_{\geq 0}$ and $\ell \in \mathbb N$,
$$\mathfrak{a}_m(X)^{\ell} \subseteq \mathfrak{a}_{\ell m}(X) \subseteq \mathfrak{a}_{m - e}(X)^{\ell}.$$
\end{customthm}

Thus, the theorem says that the valuation ideals $\mathfrak{a}_{\ell m}$ associated to a real-valued Abhyankar valuation are uniformly approximated by powers of $\mathfrak{a}_{m}$. An Abhyankar valuation (Section \ref{Abhyankar valuations}) is a generalization of the notion of the order of vanishing along a prime divisor on a normal model. As such, these are the valuations that are the most geometrically tractable (see \cite{Spi90, ELS03, JF04, JM12, Tem13, Tei14, RS14, Blu18, DS16} for some applications). For example, a key point in our proof of Theorem \ref{Theorem-A} is that Abhyankar valuations over perfect fields are locally uniformizable \cite[Theorem 1]{Knaf05}, which  implies that  any real-valued Abhyankar valuation admits a center on a regular local ring such that the corresponding valuation ideals become monomial with respect to a suitable choice of a regular system of parameters (Proposition \ref{corollary-theorem-on-local-uniformization}). In other words, real-valued Abhyankar valuations over perfect fields of prime characteristic are \emph{quasi-monomial}, a result which in characteristic $0$ follows from resolution of singularities \cite[Proposition 2.8]{ELS03}. 

In \cite{ELS03} (see also \cite{Blu18}), Theorem \ref{Theorem-A} is proved over a ground field of characteristic $0$ using the machinery of \emph{asymptotic multiplier ideals}, first defined in \cite{ELS01} in order to prove a uniformity statement about symbolic powers of ideals on regular varieties. It has since become clear that in prime characteristic a \emph{test ideal} is an analogue of a multiplier ideal. Introduced by Hochster and Huneke in their work on tight closure \cite{HH90}, the first link between test and multiplier ideals was forged by Smith \cite{Smi00} and Hara \cite{Har01}, following which Hara and Yoshida introduced the notion of test ideals of pairs \cite{HY03}. Even with the failure of cohomological vanishing theorems in positive characteristic, test ideals of pairs were shown to satisfy many of the usual properties of multiplier ideals of pairs that make the latter such an effective tool in birational geometry \cite{HY03, HT04, Tak06} (see also Theorem \ref{basic-properties-test-ideals}).  

In this paper, we employ an asymptotic version of the test ideal of a pair to prove Theorem \ref{Theorem-A}, drawing inspiration from the asymptotic multiplier ideal techniques in \cite{ELS03}. However, instead of utilizing tight closure machinery, our approach to asymptotic test ideals is based on Schwede's dual reformulation of test ideals using \emph{$p^{-e}$-linear maps}, which are like maps inverse to the Frobenius homomorphism \cite{Sch10, Sch11} (see also \cite{Smi95, LS01}).

Asymptotic test ideals are associated with graded families of ideals (Definition \ref{graded-family-ideals}), an example of the latter being the family of valuation ideals $\mathfrak{a}_{\bullet} \coloneqq \{\mathfrak{a}_m(A)\}_{m \in \mathbb R_{\geq 0}}$. For each $m \geq 0$, one constructs the \emph{$m$-th asymptotic test ideal} $\tau_m(A, \mathfrak{a}_{\bullet})$ of the family $\mathfrak{a}_{\bullet}$, and then Theorem \ref{Theorem-A} is deduced using

\begin{customthm}{B}
\label{Theorem-B}
Let $\nu$ be a nontrivial, real-valued Abhyankar valuation of $K/k$, centered on a regular local ring $(A, \mathfrak m)$, where $A$ is essentially of finite type over the perfect field $k$ of prime characteristic with fraction field $K$. Then there exists $r \in A - \{0\}$ such that for all $m \in \mathbb{R}_{\geq 0}$,
$$r\cdot \tau_m(A, \mathfrak{a}_{\bullet}) \subseteq \mathfrak{a}_m(A).$$
In other words,  
$\bigcap_{m \in \mathbb{R}_{\geq 0}} (\mathfrak{a}_m: \tau_m(A, \mathfrak{a}_{\bullet})) \neq (0).$
\end{customthm}

Finally, as in \cite{ELS03}, Theorem \ref{Theorem-B} also gives a new proof of a prime characteristic version of Izumi's theorem for arbitrary real-valued Abhyankar valuations with a common regular center (see also the more general work of \cite{RS14}).

\begin{customcor}{C}[\textbf{Izumi's Theorem for Abhyankar valuations in prime characteristic}]
\label{Corollary-C}
Let $\nu$ and $w$ be nontrivial, real-valued Abhyankar valuations of $K/k$, centered on a regular local ring $(A, \mathfrak m)$, as in Theorem \ref{Theorem-B}. Then there exists a real number $C > 0$ such that for all $x \in A - \{0\}$,
$$\nu(x) \leq Cw(x).$$
\end{customcor}

\noindent Corollary \ref{Corollary-C} implies that the valuation topologies on $A$ induced by \emph{any} two nontrivial real-valued Abhyankar valuations are \emph{linearly} equivalent.

Hara defined and used asymptotic test ideals to give a prime characteristic proof of uniform bounds on symbolic power ideals \cite{Har05}, which is independent of Hochster and Huneke's earlier proof \cite{HH02} and similar to the multiplier ideal approach of \cite{ELS01}. This paper continues the efforts of Hara and other researchers to use test ideals to prove prime characteristic analogues of statements in characteristic 0 that were established using multiplier ideals.

\noindent{\textbf{Structure of the paper:}} Section \ref{Conventions} establishes notation, and Section \ref{section-Abhyankar-val} is a brief survey of Abhyankar valuations, including a proof of local monomialization of real-valued Abhyankar valuations over perfect fields of arbitrary characteristic (Proposition \ref{corollary-theorem-on-local-uniformization}). In Section \ref{Characteristic p preliminaries}, we collect some basic facts about the Frobenius endomorphism needed in our discussion of asymptotic test ideals. Section \ref{Test Ideals} is the technical heart of the paper, and after summarizing the construction and basic properties of test ideals of pairs, we embark on a description of asymptotic test ideals. Included are discussions on the behavior of asymptotic test ideals under essentially \'etale (Subsection \ref{test-ideals-etale-maps-subsection}) and birational maps (Subsection \ref{(Asymptotic) test ideals and birational maps}), which are needed in order to reduce the proof of Theorem \ref{Theorem-B} to the case of monomial ideals in a polynomial ring. We prove Theorem \ref{Theorem-B} in Section \ref{section-proof-theorem-B}, and in Section \ref{Consequences of Theorem B} deduce Theorem \ref{Theorem-A} and Corollary \ref{Corollary-C} using Theorem \ref{Theorem-B}.

\section{Conventions}
\label{Conventions}

All rings are commutative with unity, and ring homomorphisms preserve the multiplicative identities. We use $\mathbb N$ to denote the positive integers $\{1, 2, 3, \dots \}$. A \emph{local ring} for us is a ring with a unique maximal ideal, which is \emph{not necessarily} Noetherian. A ring $S$ is \emph{essentially of finite type} over a ring $R$ if $S$ is the localization of a finitely generated $R$-algebra at some multiplicative set. Given a domain $R$, a \emph{fractional ideal} of $R$ is an $R$-submodule of the fraction field $\text{Frac}(R)$.

Given a field $k$, a \emph{variety} $X$ over $k$ will be an integral, separated scheme of finite type over $k$. We will sometimes write "$X$ is a variety of $K/k$" to mean $X$ is a variety over $k$ with function field $K$. In this paper, the field $k$ is usually perfect of characteristic $p > 0$, and $X$ is usually \emph{regular}, that is, all the local rings $\mathcal{O}_{X,x}$ are regular. Since regular local rings are unique factorization domains, Weil and Cartier divisors coincide on any regular variety. 

Regular varieties $X$ of dimension $n$ over a perfect field are smooth of relative dimension $n$ \cite[$\mathsection$2.2, Proposition 15]{BLR90}. Then $\omega_X \coloneqq \bigwedge^n\Omega_{X/k}$ is a line bundle on $X$, known as the \emph{canonical bundle}, and any Cartier divisor $D$ on $X$ such that 
$\mathcal{O}_X(D) \cong \bigwedge^n\Omega_{X/k}$
is called a \emph{canonical divisor}. The linear equivalence class of $D$ is called the \emph{canonical class}, and denoted $K_X$. By abuse of notation, we often denote a choice of a canonical divisor by $K_X$. Note $\omega_X$ is a dualizing sheaf for $X$. More precisely, if $f: X \rightarrow \Spec(k)$ is the smooth structure map,  $f^!(k[0]) = \omega_X[\text{dim} X]$ is the normalized dualizing complex of $X$ \cite[Chap. V, Theorem 8.3]{RD66}.

\section{Abhyankar valuations}
\label{section-Abhyankar-val}

In this section, we fix a ground field $k$ of arbitrary characteristic, and a finitely generated field extension $K$ of $k$. Additional restrictions on $k$ will be imposed as needed.

\subsection{Background on valuations}
A  \emph{valuation $\nu$ of $K/k$} with \emph{value group} $\Gamma_\nu$ (where $\Gamma_\nu$ is a totally ordered abelian group written additively) is  a surjective group homomorphism
$$\nu\colon K^{\times} \twoheadrightarrow \Gamma_\nu,$$
such that $\nu(k^{\times}) = \{0\}$, and for all $x, y \in K^{\times}$, if $x + y \neq 0$, then $\nu(x+y) \geq \text{min}\{\nu(x), \nu(y)\}$. The \emph{valuation ring} $R_\nu$ of $\nu$ is the ring $\{x \in K^{\times}: \nu(x) \geq 0\} \cup \{0\},$
which is local with maximal ideal
$\mathfrak{m}_\nu  \coloneqq \{x \in K^{\times}: \nu(x) > 0\} \cup \{0\}$
and residue field $\kappa_\nu \coloneqq R_\nu/\mathfrak{m}_\nu$. Both $R_\nu$ and $\kappa_\nu$ are $k$-algebras.

If $(R, \mathfrak m)$ is a local integral domain that is a $k$-algebra with fraction field $K$, we say $\nu$ is \emph{centered} on $R$ if $(R_\nu, \mathfrak{m}_\nu)$ dominates $(R, \mathfrak{m})$, that is, $R \subseteq R_\nu$ and $\mathfrak{m} = \mathfrak{m}_\nu \cap R$. Globally, given a variety $X$ over $k$ with function field $K$, we say  $\nu$ is \emph{centered} on $X$ if the canonical map $\text{Spec}(K) \rightarrow X$ extends to a morphism $\text{Spec}(R_\nu) \rightarrow X$. The image in $X$ of the closed point of $\text{Spec}(R_\nu)$ is called the \emph{center} of $\nu$ on $X$. Note when $X = \operatorname{Spec}(A)$, $\nu$ has a center on $X$ if and only if $A \subseteq R_\nu$. 

We are primarily interested in valuations whose value groups are ordered subgroups of $\mathbb R$, a condition that is equivalent to the valuation rings having Krull dimension $1$ \cite[Theorem 10.7]{Mat89}. For any such real-valued valuation $\nu$ with center $x$ on $X$ and any $m \in \mathbb{R}$, one has the \emph{m-th valuation ideal} $\mathfrak{a}_m(X) \subseteq \mathcal{O}_X$, where locally
\[   
\Gamma(U, \mathfrak{a}_m(X)) = 
     \begin{cases}
       \text{\{$f \in \mathcal{O}_X(U): \nu(f) \geq m$\},} &\quad\text{if $x \in U$,}\\
       \text{$\mathcal{O}_X(U)$,} &\quad\text{if $x \notin U$.} \\ 
     \end{cases}
\]
Note $\mathfrak{a}_m(X) = \mathcal{O}_X$ when $m \leq 0$. If $X = \text{Spec}(A)$, we use $\mathfrak{a}_m(A)$ to denote the ideal $\{a \in A : \nu(a) \geq m\}$ of $A$, and when $X$ or $A$ is clear from context, we just write $\mathfrak{a}_m$. 

An important feature of valuation ideals implicitly used throughout the paper is the following:

\begin{lemma}
\label{valuation-ideals-primary}
Given an affine variety $\Spec(A)$, if $\mathfrak{p}$ is the prime ideal of $A$ corresponding to the center of a real-valued valuation $\nu$ on $\Spec(A)$, then for all real numbers $m > 0$, the ideal $\mathfrak{a}_m(A)$ is $\mathfrak{p}$-primary. Moreover, $\mathfrak{a}_m(A_{\mathfrak p}) = \mathfrak{a}_m(A)A_{\mathfrak p}$.
\end{lemma}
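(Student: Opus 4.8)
The plan is to argue directly from the definitions of $\mathfrak{a}_m(A)$ and of the center, using no structure theory beyond the valuation axioms. The one fact to keep in mind throughout is that "$\mathfrak p$ is the center of $v$ on $\Spec(A)$" means precisely $\mathfrak p = \mathfrak m_v \cap A$; equivalently, $v(a) > 0$ for every $a \in \mathfrak p \setminus \{0\}$, while $v(s) = 0$ for every $s \in A \setminus \mathfrak p$ (since $s \in A \subseteq R_v$ forces $v(s) \geq 0$, and $s \notin \mathfrak m_v$ forces $v(s) \not> 0$). Everything else is a short manipulation of inequalities in $\mathbb R$.

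First I would prove that $\mathfrak{a}_m(A)$ is $\mathfrak p$-primary for $m > 0$. It is a proper ideal because $v(1) = 0 < m$. For primariness: suppose $xy \in \mathfrak{a}_m(A)$, so $v(x) + v(y) = v(xy) \geq m$, and suppose $x \notin \mathfrak{a}_m(A)$, i.e. $v(x) < m$; then $v(y) \geq m - v(x) > 0$, so $v(y^n) = n\,v(y) \geq m$ once $n \geq m / v(y)$, whence $y^n \in \mathfrak{a}_m(A)$. Thus $\mathfrak{a}_m(A)$ is primary. Its radical is $\mathfrak p$: if $a \in \mathfrak p$ then $v(a) > 0$, so $a^n \in \mathfrak{a}_m(A)$ for $n \gg 0$; conversely if $a^n \in \mathfrak{a}_m(A)$ then $n\,v(a) \geq m > 0$, forcing $v(a) > 0$, i.e. $a \in \mathfrak p$. (If $v$ happens to be trivial the assertion degenerates to $(0)$ being $(0)$-primary, which holds since $A$ is a domain.)

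For the second assertion, note first that $v$ restricts to $A_{\mathfrak p}$ because $A_{\mathfrak p} \subseteq R_v$ (every $s \in A \setminus \mathfrak p$ is a $v$-unit), and $\mathfrak{a}_m(A_{\mathfrak p}) = \{x \in A_{\mathfrak p} : v(x) \geq m\}$ is genuinely an ideal of $A_{\mathfrak p}$ by the valuation axioms. The inclusion $\mathfrak{a}_m(A)A_{\mathfrak p} \subseteq \mathfrak{a}_m(A_{\mathfrak p})$ is immediate. Conversely, write $x \in \mathfrak{a}_m(A_{\mathfrak p})$ as $x = a/s$ with $a \in A$, $s \in A \setminus \mathfrak p$; then $v(a) = v(x) + v(s) = v(x) \geq m$ since $v(s) = 0$, so $a \in \mathfrak{a}_m(A)$ and $x = a/s \in \mathfrak{a}_m(A)A_{\mathfrak p}$. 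This gives equality. There is no real obstacle here: the proof is entirely elementary, and the only points needing a moment's care are the identification of $v$ as vanishing on $A \setminus \mathfrak p$ (this is exactly what "center" encodes) and the use of $m > 0$ to guarantee the strict positivity $v(y) > 0$ that lets a power of $y$ enter $\mathfrak{a}_m(A)$.
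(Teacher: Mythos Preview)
Your proof is correct and follows essentially the same approach as the paper's: both use the Archimedean property to show that elements of positive valuation eventually land in $\mathfrak a_m(A)$, deduce primariness and the radical computation from this, and handle the localization statement by noting $v(s)=0$ for $s\in A\setminus\mathfrak p$. Your write-up is slightly more explicit (spelling out why $\mathfrak a_m(A)$ is proper and why $v$ vanishes on $A\setminus\mathfrak p$), but there is no substantive difference.
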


\begin{proof}
For $b \in A$, if $\nu(b) > 0$, then by the Archimedean property, $n\nu(b) = \nu(b^n) \geq m$, for some $n \in \mathbb{N}$. This shows that $\mathfrak{p}$ is the radical of $\mathfrak{a}_m(A)$. If $ab \in \mathfrak{a}_m(A)$ and $a \notin \mathfrak{a}_m(A)$, then $\nu(b) > 0$, so that for some $n$, $b^n \in \mathfrak{a}_m(A)$, as we just showed. Hence $\mathfrak{a}_m(A)$ is $\mathfrak{p}$-primary.

 Note if $s \notin A - \mathfrak{p}$, $\nu(s) = 0$. Thus, the inclusion $\mathfrak{a}_m(A)A_{\mathfrak{p}} \subseteq \mathfrak{a}_m(A_{\mathfrak p})$ is clear. Conversely, if $a/s \in \mathfrak{a}_m(A_{\mathfrak p})$, since $\nu(a/s) = \nu(a) - \nu(s) = \nu(a)$, we get $a \in \mathfrak{a}_m(A)$, proving $\mathfrak{a}_m(A_{\mathfrak p}) \subseteq \mathfrak{a}_m(A)A_{\mathfrak{p}}$.
\end{proof}

\subsection{Abhyankar valuations} 
\label{Abhyankar valuations}
Associated with a valuation $\nu$ of $K/k$ with value group $\Gamma_\nu$ and residue field $\kappa_\nu$ are two important invariants.

\begin{definition}
\label{rational-rank-residue-degree}
The \textbf{rational rank} of $\nu$, abbreviated as $\rr{\nu}$, is by definition the dimension of the $\mathbb{Q}$-vector space $\mathbb{Q} \otimes_{\mathbb Z} \Gamma_\nu$, and the \textbf{transcendence degree of $\nu$}, abbreviated as $\td \nu$, is the transcendence degree of $\kappa_\nu$ over $k$.
\end{definition}

The following fundamental inequality relates the rational rank and transcendence degree of a valuation (see \cite{Abh56} for a generalization):

\begin{theorem}\textup{\cite[VI, $\mathsection$10.3, Corollary 1]{Bou89}}
\label{Abhyankar-inequality}
Let $\nu$ be a valuation of $K/k$, with value group $\Gamma_\nu$ and residue field $\kappa_\nu$. Then
$$\rr{\nu} + \td{\nu} \leq \td{K/k}.$$
If equality holds in the above inequality, then $\Gamma_\nu$ is a finitely generated abelian group (hence $\Gamma_\nu \cong \mathbb{Z}^{\oplus r}$, for $r = \rr{\nu}$), and $\kappa_\nu$ is a finitely generated field extension of $k$.
\end{theorem}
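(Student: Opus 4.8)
My plan is to deduce both assertions from a single algebraic-independence lemma: \emph{if $x_1,\dots,x_r \in K^{\times}$ have values $v(x_1),\dots,v(x_r)$ that are linearly independent over $\mathbb Z$, and $y_1,\dots,y_s \in R_v$ have residues $\bar y_1,\dots,\bar y_s \in \kappa_v$ that are algebraically independent over $k$, then $x_1,\dots,x_r,y_1,\dots,y_s$ are algebraically independent over $k$ in $K$.} Granting this, the inequality is immediate: $\td(K/k)$ is finite since $K/k$ is finitely generated, so choosing the $x_i$ so that $v(x_1),\dots,v(x_r)$ is a $\mathbb Q$-basis of $\mathbb Q\otimes_{\mathbb Z}\Gamma_v$ (whence $r = \rr{v}$) and the $y_j$ so that $\bar y_1,\dots,\bar y_s$ is a transcendence basis of $\kappa_v/k$ (whence $s = \td{v}$), the lemma forces $\rr{v} + \td{v} = r + s \le \td(K/k)$.

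To prove the lemma I would argue by contradiction: suppose $F(x_1,\dots,x_r,y_1,\dots,y_s)=0$ for some nonzero $F \in k[X_1,\dots,X_r,Y_1,\dots,Y_s]$, and group the monomials of $F$ according to the exponent vector $\alpha \in \mathbb Z_{\ge 0}^{r}$ of the variables $X_i$, writing $F = \sum_{\alpha} c_{\alpha}(Y)\,X^{\alpha}$ (a finite sum) with each $c_{\alpha} \in k[Y_1,\dots,Y_s]$. For every $\alpha$ with $c_{\alpha}\neq 0$ the element $c_{\alpha}(y)$ lies in $R_v$ and has nonzero residue $c_{\alpha}(\bar y)$, by algebraic independence of the $\bar y_j$; hence $v\big(c_{\alpha}(y)\big)=0$ and $v\big(c_{\alpha}(y)\,x^{\alpha}\big)=\sum_{i}\alpha_i\,v(x_i)$. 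The $\mathbb Z$-linear independence of $v(x_1),\dots,v(x_r)$ makes these last values pairwise distinct as $\alpha$ ranges over the nonzero terms, so the valuation of the sum equals the unique smallest among them, which is finite — contradicting the vanishing of $F(x,y)$.

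For the equality clause, I would assume $\rr{v}+\td{v}=\td(K/k)$ and fix $x_i,y_j$ as above with $r=\rr{v}$ and $s=\td{v}$. By the lemma the $r+s=\td(K/k)$ elements $x_1,\dots,x_r,y_1,\dots,y_s$ form a transcendence basis of $K/k$, so $K$ is finite over $L:=k(x_1,\dots,x_r,y_1,\dots,y_s)$. Rerunning the valuation computation above for quotients of polynomials shows that $v|_{L}$ has value group $\Gamma_0 := \mathbb Z v(x_1)+\cdots+\mathbb Z v(x_r)$, which is free abelian of rank $r$, and residue field $k(\bar y_1,\dots,\bar y_s)$, a purely transcendental (hence finitely generated) extension of $k$. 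I would then invoke the classical bound $e\cdot f\le [K:L]$ for the ramification index $e=[\Gamma_v:\Gamma_0]$ and residue degree $f=[\kappa_v:k(\bar y_1,\dots,\bar y_s)]$ of $v$ over $v|_{L}$ in the finite extension $K/L$; this forces $e$ and $f$ to be finite. Hence $\Gamma_v$ contains the finitely generated subgroup $\Gamma_0$ with finite index and is therefore finitely generated, and being totally ordered it is torsion-free, so $\Gamma_v\cong\mathbb Z^{r}$ with $r=\rr{v}$; likewise $\kappa_v$ is finite over the finitely generated extension $k(\bar y_1,\dots,\bar y_s)/k$, hence finitely generated over $k$.

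The elementary but fiddly point is the bookkeeping in the lemma: isolating the principle that a sum of terms with pairwise distinct valuations has valuation equal to the minimum, and checking that the exponent vectors of the nonzero terms really are distinct. The genuinely heavier input sits in the equality clause, where I lean on the fundamental inequality $ef\le n$ for finite extensions of valued fields (itself proved via a linear-independence lemma for well-chosen lifts); if a self-contained treatment were required, that is the step I would have to develop rather than cite.
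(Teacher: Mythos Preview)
The paper does not prove this theorem; it is quoted from Bourbaki \cite[VI, \S10.3, Corollary 1]{Bou89} without argument. Your proof is correct and is essentially the classical one found there: the algebraic-independence lemma you isolate is precisely \cite[VI, \S10.3, Theorem 1]{Bou89} (which the paper itself invokes later in the proof of Theorem~\ref{Theorem-B}), and the equality clause is handled in Bourbaki exactly as you propose, by passing to the finite extension $K/L$ and applying the fundamental inequality $ef \le [K:L]$.

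Two small remarks on presentation. First, in deducing the inequality you implicitly use that $\rr v$ and $\td v$ are finite; strictly speaking, the lemma shows that \emph{any} choice of $r$ elements with $\mathbb Z$-independent values and $s$ elements with algebraically independent residues gives $r+s$ algebraically independent elements of $K$, so $r+s \le \td(K/k)$ for all such $r,s$, whence finiteness of $\rr v$ and $\td v$ follows rather than being assumed. Second, the claim that the residue field of $v|_L$ is exactly $k(\bar y_1,\dots,\bar y_s)$ deserves one more line: for $P/Q \in L$ with $v(P)=v(Q)$, the unique minimizing exponent vectors in $P$ and $Q$ coincide (by $\mathbb Z$-independence of the $v(x_i)$), and dividing numerator and denominator by the corresponding monomial $x^{\alpha_0}$ exhibits the residue as a ratio of polynomials in the $\bar y_j$. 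With these clarifications your write-up is complete.
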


An \textbf{Abhyankar valuation} $\nu$ of $K/k$ is a valuation for which $\rr{\nu} + \td{\nu} = \td{K/k}$.

\begin{examples}
\label{examples-Abhyankar-valuations}
{\*}
\begin{enumerate}
\item (\emph{Prototype}) Let $X$ be a normal variety over $k$ of dimension $n$, and $D$ a prime divisor on X. Then we get a valuation of $K(X)/k$, denoted $\text{ord}_D$, which is called the \emph{order of vanishing along $D$}. The value group of $\text{ord}_D$ is $\mathbb{Z}$, and the valuation ring $\mathcal{O}_{X,D}$ equals the stalk at the generic point of $D$. Then $\text{ord}_D$ is Abhyankar since it is has rational rank $1$ and transcendence degree $n-1$.

\noindent In fact, Zariski showed that if $\nu$ is an Abhyankar valuation of $K/k$ of rational rank $1$, then there exists some normal model $X$ of $K/k$ and a prime divisor $D$ on $X$ such that $\nu$ is given by order of vanishing along $D$ \cite[VI, $\mathsection$14, Theorem 31]{ZS60}.

\item There are real-valued Abhyankar valuations on $\mathbb{F}_p(X, Y)/\mathbb{F}_p$ which are not discrete. For example, let $\alpha$ be any irrational number, and $\Gamma$ the subgroup $\mathbb{Z} \oplus \mathbb{Z}\alpha$ of $\mathbb{R}$ with the induced order. There exists a unique valuation $\nu$ of $\mathbb{F}_p(X, Y)/\mathbb{F}_p$ with value group $\Gamma$ such that
$$\nu_\alpha(X) = 1 \hspace{2mm} \text{and} \hspace{2mm} \nu_\alpha(Y) = \alpha.$$
Then $\nu_\alpha$ is Abhyankar with rational rank $2$ and transcendence degree $0$. Note $\nu_\alpha$ is not a discrete valuation because $\Gamma$ is not isomorphic to $\mathbb Z$.

\item (\emph{Non real-valued Abhyankar valuation}) Consider again the extension $\mathbb{F}_p(X,Y)/\mathbb{F}_p$. Giving $\mathbb{Z} \oplus \mathbb{Z}$ the lexicographical order, there exists a unique valuation $\nu_{lex}$ of $\mathbb{F}_p(X, Y)/\mathbb{F}_p$ with value group $\mathbb Z \oplus \mathbb Z$ such that
$$\nu_{lex}(X) = (1,0) \hspace{2mm} \text{and} \hspace{2mm} \nu_{lex}(Y) = (0,1).$$
Since the rational rank of $\nu_{lex}$ is $2$, it is also Abhyankar. However, there is no order preserving embedding of $\mathbb{Z} \oplus \mathbb{Z}$ with lex order into $\mathbb R$. 

\item (\emph{Non-example}) Take the formal Laurent series field $\mathbb{F}_p((t))$. This has the $t$-adic valuation $\nu_t$ over $\mathbb{F}_p$, whose corresponding valuation ring is the formal power series ring $\mathbb{F}_p[[t]]$. Choose an embedding of fields 
$$\mathbb{F}_p(X, Y) \hookrightarrow \mathbb{F}_p((t))$$
that maps $X \mapsto t$, and $Y$ to $q(t) \in \mathbb{F}_p[[t]]$ such that $\{t, q(t)\}$ are algebraically independent over $\mathbb{F}_p$. Restricting $\nu_t$ to $\mathbb{F}_p(X,Y)$ gives a discrete valuation $\nu_{q(t)}$ of $\mathbb{F}_p(X, Y)/\mathbb{F}_p$, such that 
$$\kappa_{\nu_{q(t)}} = \mathbb{F}_p.$$
This is because $\kappa_{\nu_{q(t)}}$ contains $\mathbb{F}_p$, and is also contained in the residue field of $\mathbb{F}_p[[t]]$, which equals $\mathbb F_p$. Then $\rr{\nu_{q(t)}} = 1$, \text{tr.deg $\nu_{q(t)} = 0$}, and so $\nu_{q(t)}$ is not an Abhyankar valuation.
\end{enumerate}
\end{examples}

\subsection{Local uniformization of Abhyankar valuations}
Local uniformization of a valuation $\nu$ of $K/k$ is a local analogue of resolution of singularities. In its simplest form, it asks if there exists a regular variety of $K/k$ on which $\nu$ admits a center. Zariski solved local uniformization when $k$ has characteristic $0$ \cite{Zar40}, long before Hironaka's seminal work on resolution of singularities, and used it to simplify the proof of resolution of singularities of surfaces \cite{Zar42}. More recently, de Jong's work on alterations \cite{deJ96} showed that local uniformization can always be achieved up to a finite extension of the function field, and that in characteristic $p > 0$, this extension can even be taken to be purely inseparable \cite{Tem13}. However, local uniformization remains elusive in positive characteristic, although for Abhyankar valuations Knaf and Kuhlmann establish the following result.

\begin{theorem}\textup{\cite[Theorem 1.1]{Knaf05}}
\label{local-uniformization-Abhyankar-valuations}
Let $K$ be a finitely generated field extension of any field $k$, and $\nu$ an Abhyankar valuation of $K/k$ with valuation ring $(R_\nu, \mathfrak{m}_\nu, \kappa_\nu)$. Suppose that $\kappa_\nu$ is separable over $k$. Then, given any finite subset $Z \subset R_\nu$, there exists a variety $X$ of $K/k$, and a center $x$ of $\nu$ on $X$ satisfying the following properties:
\begin{enumerate}
\item $\mathcal{O}_{X,x}$ is a regular local ring of dimension equal to the rational rank of $\nu$.
\item $Z \subseteq \mathcal{O}_{X,x}$, and there exists a regular system of parameters $x_1, \dots, x_d$ of $\mathcal{O}_{X,x}$ such that every $z \in Z$ admits a factorization
$$z = ux^{a_1}_1\dots x^{a_d}_d,$$
for some $u \in \mathcal{O}_{X,x}^{\times}$, and $a_i \geq 0$.
\end{enumerate}
\end{theorem}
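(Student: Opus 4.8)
The plan is to reduce, by choosing generators of $K$ adapted to $v$, to the case of the \emph{monomial} valuation on a rational function field — for which a regular monomializing center is essentially tautological — and then to transfer that center across a finite extension of function fields, the separability of $\kappa_v/k$ being what keeps the extension under control. First I would invoke Theorem~\ref{Abhyankar-inequality}: the defining equality $\rr v + \td v = \td{K/k}$ forces $\Gamma_v$ to be free abelian of rank $r := \rr v$ and $\kappa_v$ to be a finitely generated extension of $k$ of transcendence degree $s := \td v$, with $r + s = \td{K/k}$. Choosing a $\mathbb Z$-basis of $\Gamma_v$ and negating entries as needed, I may take it to consist of positive elements $\gamma_1,\dots,\gamma_r$, and pick $x_1,\dots,x_r \in \mathfrak m_v$ with $v(x_i)=\gamma_i$. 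Since $\kappa_v/k$ is finitely generated and separable it is separably generated, so I can choose a separating transcendence basis $\bar y_1,\dots,\bar y_s$ of $\kappa_v/k$ and, by the primitive element theorem, an element $\bar\theta$ with $\kappa_v = k(\bar y_1,\dots,\bar y_s)(\bar\theta)$; lift these to $y_1,\dots,y_s,\theta \in R_v$. Finally, as $K$ is finitely generated over $k$ and $K=\operatorname{Frac}(R_v)$, I may adjoin finitely many more elements of $R_v$ so that the $k$-subalgebra $A\subseteq R_v$ generated by the $x_i$, the $y_j$, $\theta$, the finite set $Z$, and these extra elements has $\operatorname{Frac}(A)=K$. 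Applying Theorem~\ref{Abhyankar-inequality} to $v$ restricted to $k(x_1,\dots,x_r,y_1,\dots,y_s)$ — whose value group contains $\langle\gamma_1,\dots,\gamma_r\rangle=\Gamma_v$ and whose residue field contains $k(\bar y_1,\dots,\bar y_s)$ — shows that $x_1,\dots,x_r,y_1,\dots,y_s$ are algebraically independent over $k$; hence $P := k[x_1,\dots,x_r,y_1,\dots,y_s]$ is a polynomial ring, $K$ is finite over $\operatorname{Frac}(P)$, and after inverting a suitable nonzero element the morphism $\Spec A \to \Spec P$ is finite and dominant.

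The point of these choices is that $v$ restricted to $\operatorname{Frac}(P)$ is the monomial valuation with weights $v(x_i)$: it is trivial on $k(y_1,\dots,y_s)$ (the $y_j$ have value $0$ with algebraically independent residues, so every nonzero element of $k[y_1,\dots,y_s]$ has value $0$), and since the $v(x_i)$ form a $\mathbb Z$-basis of $\Gamma_v$ the values $\sum_i \alpha_i v(x_i)$ of distinct monomials $x^\alpha$ are pairwise distinct, so $v\bigl(\sum_\alpha c_\alpha x^\alpha\bigr) = \min_\alpha \sum_i \alpha_i v(x_i)$ whenever $c_\alpha \in k(y_1,\dots,y_s)^\times$. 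Consequently $v$ is centered on the regular local ring $P_{(x_1,\dots,x_r)}$, which has dimension exactly $r=\rr v$, with regular system of parameters $x_1,\dots,x_r$; and any prescribed finite subset of $P$ can be put into the form ``unit times monomial in a regular system of parameters'' after finitely many blow-ups of $\Spec P$ along ideals generated by subsets of the $x_i$ — an elementary combinatorial step (resolving the relevant fan). This gives the conclusion of the theorem for the subfield $\operatorname{Frac}(P)$.

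It remains to incorporate the algebraic element $\theta$, the extra generators, and the finite set $Z$ — equivalently, to resolve the finite dominant morphism $\Spec A \to \Spec P$ along $v$ and then monomialize the prescribed elements. The residue extension $\kappa_v/k(\bar y_1,\dots,\bar y_s)$ is finite separable and the value group $\Gamma_v$ does not enlarge along $K/\operatorname{Frac}(P)$, so the essential point is that $v$ is \emph{defectless} along this extension; this is exactly where the Abhyankar hypothesis and the separability of $\kappa_v/k$ are used in an essential way. Granting defectlessness, one captures the separable residue generator $\bar\theta$ by a single unramified (\'etale) adjustment over $P_{(x_1,\dots,x_r)}$ — realized, via Hensel's lemma over the Henselization, inside a regular local ring of dimension $r$ on a model of $K/k$ on which $v$ still restricts to a monomial valuation in the parameters $x_1,\dots,x_r$ — and then finitely many further blow-ups of this model suffice to reach a regular local ring $(\mathcal O_{X,x},\mathfrak n)$ of dimension $r$ with a regular system of parameters $x_1',\dots,x_r'$ in which $v$ is monomial, such that $Z\subseteq\mathcal O_{X,x}$. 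On such a model each $z\in Z$ satisfies $v(z)=\sum_i a_i v(x_i')$ with $a_i\ge 0$ (the exponents of the lowest-weight monomial occurring in $z$), and — the blow-ups having been chosen so that this monomial divides all the others occurring in $z$ — the quotient $z/\prod_i(x_i')^{a_i}$ is a unit of $\mathcal O_{X,x}$, which yields the required factorization $z=u\,(x_1')^{a_1}\cdots(x_r')^{a_r}$.

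The genuine obstacle is that final step: proving that an Abhyankar valuation with separable residue field is defectless, and then resolving the finite cover $\Spec A\to\Spec P$ along $v$ by blow-ups without ever raising the dimension of the center above $\rr v$. In characteristic $0$ this is supplied by resolution of singularities applied to the finite cover (the route taken in \cite{ELS03}), but in characteristic $p$, where resolution is unavailable, it requires the delicate valuation-theoretic analysis of Knaf and Kuhlmann.
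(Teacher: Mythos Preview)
The paper does not prove this theorem at all: it is quoted verbatim as \cite[Theorem 1.1]{Knaf05} and used as a black box (see the sentence immediately following the statement, and Remark~\ref{local-uniformization-perfect-ground-field}). So there is no ``paper's own proof'' to compare your proposal against.

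As for your proposal on its own terms: the architecture you describe --- choose $x_1,\dots,x_r$ whose values freely generate $\Gamma_v$ and lifts $y_1,\dots,y_s$ of a separating transcendence basis of $\kappa_v$, observe that $v$ restricts to the monomial valuation on $k(x_1,\dots,x_r,y_1,\dots,y_s)$ with center the regular local ring $k[x,y]_{(x_1,\dots,x_r)}$, and then push this across the finite extension $K/k(x,y)$ --- is indeed the standard skeleton, and is essentially how Knaf--Kuhlmann set things up. You are also right that the entire difficulty lives in the last step. But your proposal does not actually carry that step out: you ``grant'' defectlessness and then assert that an \'etale adjustment plus finitely many blow-ups suffice, without giving any argument for either. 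In characteristic $p$ the defectlessness of Abhyankar places (the ``generalized stability theorem'' of Kuhlmann) is already a substantial result, and even with it in hand, producing a regular center of the correct dimension on a model of $K$ --- not merely on its Henselization --- while simultaneously monomializing the prescribed finite set $Z$, is precisely the content of the Knaf--Kuhlmann paper. So what you have written is an outline that reduces the theorem to its hardest part and then cites that part; it is not a proof, and you candidly say so in your final paragraph.

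One small technical wrinkle: the theorem as stated does not assume $v$ is real-valued, so $\Gamma_v$ need not embed order-preservingly in $\mathbb R$ (cf.\ Example~\ref{examples-Abhyankar-valuations}(3)). Your reduction to a ``monomial valuation with weights $v(x_i)$'' implicitly treats the weights as real numbers; the argument still goes through with $\Gamma_v$-valued weights, but the phrasing should be adjusted.
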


\begin{remark}
\label{local-uniformization-perfect-ground-field}
Any Abhyankar valuation $\nu$ over a perfect field $k$ satisfies Theorem \ref{local-uniformization-Abhyankar-valuations} since $\kappa_\nu/k$ is then automatically separable. There are other related approaches to local uniformization of Abhyankar valuations. For instance, see \cite[Section 5]{Tem13} and \cite[Corollary 7.25]{Tei14}.                                                                                                                                                                                                                                                                                                                                                                                                                                                                                                                                                                                                                                                                                                                                                                                                                                                                                                                                                                                                                                      \end{remark}

The presence of the set $Z$ in Theorem \ref{local-uniformization-Abhyankar-valuations} allows us to deduce that real-valued Abhyankar valuations over perfect fields of arbitrary characteristic can be locally monomialized. This will be important in the proof of Theorem \ref{Theorem-B}.

\begin{proposition}[\textbf{Local monomialization}]
\label{corollary-theorem-on-local-uniformization}
Assume $k$ is perfect, and $\nu$ is a non-trivial Abhyankar valuation of $K/k$ of rational rank $d$, centered on an affine variety $\Spec(R)$ of $K/k$. Then there exists a variety $\Spec(S)$ of $K/k$, along with an inclusion of rings $R \hookrightarrow S$ satisfying:
\begin{enumerate}
\item[(a)] $\Spec(S)$ is regular, and $\nu$ is centered at $x \in \Spec(S)$ such that $\mathcal{O}_{\Spec(S),x}$ is a regular local ring of dimension $d$, and the induced map of residue fields $\kappa(x) \hookrightarrow \kappa_\nu$ is an isomorphism.
\vspace{0.5mm}

\item[(b)] There exists a regular system of parameters $\{x_1, \dots, x_d\}$ of $\mathcal{O}_{\Spec(S),x}$ such that $\nu(x_1), \dots,$ $\nu(x_d)$ freely generate the value group $\Gamma_\nu$.
\end{enumerate}
If in addition $\nu$ is real-valued, then the valuation ideals of $\mathcal{O}_{\Spec(S),x}$ are generated by monomials in $x_1,\dots,x_d$.
\end{proposition}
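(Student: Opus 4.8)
\emph{Plan.} The strategy is to deduce everything from the Knaf--Kuhlmann local uniformization theorem (Theorem~\ref{local-uniformization-Abhyankar-valuations}), which applies to $v$ because $k$ is perfect (Remark~\ref{local-uniformization-perfect-ground-field}), by feeding it a judiciously chosen finite set $Z \subset R_v$ and then passing to a suitable affine regular open subscheme of the variety it produces. Throughout we use that, $v$ being Abhyankar, Theorem~\ref{Abhyankar-inequality} forces $\Gamma_v$ to be free abelian of rank $d = \rr{v}$ and $\kappa_v$ to be a finitely generated field extension of $k$.

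\emph{Choice of $Z$ and property (a).} Write $R = k[y_1, \dots, y_n]$; the $y_j$ lie in $R_v$ because $v$ is centered on $\Spec(R)$. Choose $\bar b_1, \dots, \bar b_s \in \kappa_v$ generating $\kappa_v$ over $k$ and lift them to $b_1, \dots, b_s \in R_v$. Choose a $\mathbb Z$-basis $\gamma_1, \dots, \gamma_d$ of $\Gamma_v$; replacing each $\gamma_i$ by $-\gamma_i$ if necessary we may assume $\gamma_i > 0$, so that $\gamma_i = v(t_i)$ for some $t_i \in R_v\setminus\{0\}$. I will apply Theorem~\ref{local-uniformization-Abhyankar-valuations} with $Z = \{y_j\}\cup\{b_i\}\cup\{t_i\}$, obtaining a variety $X$ of $K/k$, a center $x$ of $v$ with $A_0 := \mathcal O_{X,x}$ regular local of dimension $d$, $Z \subseteq A_0$, and a regular system of parameters $x_1, \dots, x_d$ of $A_0$ in terms of which every $z \in Z$ is a unit times a monomial. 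To produce $\Spec(S)$: the regular locus of $X$ is open (varieties are excellent) and contains $x$, so I pick an affine open $\Spec(S_0)$ of it around $x$ and invert finitely many elements of $S_0$ outside the prime of $x$ so as to draw the $y_j$ into the ring, obtaining $R \hookrightarrow S$ with $\Spec(S) \subseteq X$ an affine regular open containing $x$ (hence a variety of $K/k$) and $\mathcal O_{\Spec(S),x} = A_0$. One checks $v$ is still centered at $x$ on $\Spec(S)$ because every point of $\Spec(R_v)$ maps, under $\Spec(R_v)\to X$, to a generization of $x$, and such generizations lie in any open neighbourhood of $x$. Property (a) then follows; the only non-formal point is the residue-field isomorphism, which holds because the local inclusion $A_0 \hookrightarrow R_v$ induces an injection $\kappa(x)\hookrightarrow\kappa_v$ whose image contains $k$ and the classes $\bar b_i$, hence is all of $\kappa_v$.

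\emph{Property (b) and monomiality.} For (b): each $x_i \in \mathfrak m_{A_0} = \mathfrak m_v \cap A_0$ has $v(x_i) > 0$, while each unit occurring in the factorization of $t_i$ lies outside $\mathfrak m_{A_0}$, hence outside $\mathfrak m_v$, so has valuation $0$; therefore $\gamma_i$ is a non-negative integer combination of $v(x_1),\dots,v(x_d)$, whence $\langle v(x_1),\dots,v(x_d)\rangle = \Gamma_v$. As $\Gamma_v\cong\mathbb Z^d$ is free of rank $d$ and is generated by these $d$ elements, they form a $\mathbb Z$-basis, which is (b). For the last clause, suppose $v$ is real-valued, set $A := \mathcal O_{\Spec(S),x}$ and $\gamma_i := v(x_i)$; by (b) the $\gamma_i$ are positive and $\mathbb Q$-linearly independent, so distinct monomials $x^\alpha = x_1^{\alpha_1}\cdots x_d^{\alpha_d}$ have distinct valuations $\sum_i\alpha_i\gamma_i$. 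Since $\operatorname{gr}_{\mathfrak m}(A)\cong\kappa_v[X_1,\dots,X_d]$, each nonzero $a\in A$ has a well-defined monomial expansion with coefficients $\bar c_\alpha(a)\in\kappa_v$, and the crux is the value formula $v(a) = \min\{v(x^\alpha): \bar c_\alpha(a)\neq 0\}$. The minimum on the right is attained (only finitely many monomials have bounded valuation, as all $\gamma_i>0$) and at a \emph{unique} $\alpha^{*}$ (distinct valuations); choosing $N\ge|\alpha^{*}|$ with $(N+1)\min_i\gamma_i$ exceeding this minimum, I split $a = a' + r$ with $r\in\mathfrak m^{N+1}$, so $v(r)\ge(N+1)\min_i\gamma_i$, and $a'$ a finite sum of unit multiples of monomials whose unique lowest-valuation term, at $\alpha^{*}$, cannot cancel; comparing valuations gives the formula. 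Granting it, fix $m\in\mathbb R$ (the case $m\le 0$ being trivial) and let $J\subseteq A$ be the monomial ideal generated by $\{x^\alpha : v(x^\alpha)\ge m\}$. Then $J\subseteq\mathfrak a_m(A)$ is clear, and for $a\in\mathfrak a_m(A)$ the value formula forces every monomial in the expansion of $a$ to lie in $J$; since $\mathfrak m^{N+1}\subseteq J$ once $(N+1)\min_i\gamma_i\ge m$, truncating $a$ modulo $\mathfrak m^{N+1}$ exhibits $a\in J$. Hence $\mathfrak a_m(A)=J$ is monomial.

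\emph{Main obstacle.} The delicate part is engineering a \emph{single} set $Z$ whose one uniformization simultaneously delivers (a), (b) and $R\hookrightarrow S$ --- in particular the step ``values of a regular system of parameters generate $\Gamma_v$'' $\Rightarrow$ ``they freely generate $\Gamma_v$'' relies both on $\Gamma_v$ being free of rank exactly $d$ and on units of the dominated local ring having valuation $0$ --- together with the bookkeeping needed to descend from the abstract variety of Theorem~\ref{local-uniformization-Abhyankar-valuations} to an affine regular open carrying $R$. The value formula of the last paragraph is the technical heart of the monomiality statement; its only subtlety is that the monomial expansion is infinite, forcing the truncation at an order where the tail in $\mathfrak m^{N+1}$ outweighs the leading term.
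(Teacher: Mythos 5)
Your proposal is correct and follows essentially the same route as the paper: the same choice of the finite set $Z$ (ring generators of $R$, lifts of generators of $\kappa_v$, elements realizing a basis of $\Gamma_v$) fed into the Knaf--Kuhlmann theorem, the same ``$d$ generators of a free group of rank $d$ form a basis'' step for (b), and the same truncation-modulo-$\mathfrak m^{N+1}$ plus distinct-monomial-valuations argument for monomiality. The only cosmetic difference is that you phrase the last step via the associated graded ring, while the paper works directly with sums of unit multiples of monomials modulo a power of the maximal ideal.
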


\begin{proof}
Since the value group $\Gamma_\nu$ is free of rank $d$ (Theorem \ref{Abhyankar-inequality}), one can choose $r_1, \dots, r_d \in R_\nu$ such that $\nu(r_1), \dots, \nu(r_d)$ freely generate $\Gamma_\nu$. Also, because $\kappa_\nu$ is a finitely generated field extension of $k$, there exist $y_1, \dots, y_j \in R_\nu$ whose images in $\kappa_\nu$ generate $\kappa_\nu$ over $k$. Let $t_1, \dots, t_n \in K$ be generators for $R$ over $k$. Then $t_1, \dots, t_n \in R_\nu$ because $\nu$ is centered on $\text{Spec}(R)$. Defining 
$$Z := \{t_1, \dots, t_n, y_1, \dots, y_j, r_1, \dots, r_d\},$$
by Theorem \ref{local-uniformization-Abhyankar-valuations} there exists a variety $X$ over $k$ with function field $K$ such that $\nu$ is centered at a regular point $x \in X$ of codimension $d$,  $Z \subseteq \mathcal{O}_{X,x}$, and there exists a regular system of parameters $\{x_1, \dots, x_d\}$ of $\mathcal{O}_{X,x}$ with respect to which every $z \in Z$ can be factorized as
$$z = ux^{a_1}_1 \dots x^{a_d}_d,$$
for some $u \in \mathcal{O}_{Y,y}^{\times}$, and integers $a_i \geq 0$. In particular, each $\nu(r_i)$ is a $\mathbb{Z}$-linear combination of $\nu(x_1), \dots, \nu(x_d)$, which shows that $\{\nu(x_1), \dots, \nu(x_d)\}$ also freely generates $\Gamma_\nu$. Moreover, by our choice of $Z$, $\kappa(x) \hookrightarrow \kappa_\nu$ is an isomorphism.

Since $t_1, \dots, t_n \in \mathcal{O}_{X,x}$, we have an inclusion $R \subseteq \mathcal{O}_{X,x}$. Now restricting to an affine neighborhood of $x$, we may assume $X = \text{Spec}(S)$, where $t_1, \dots, t_n \in S$ and $S$ is regular. Then by construction, $R \subseteq S$, and parts (a) and (b) of the Proposition are satisfied.

Suppose $\nu$ is also real-valued, and $\mathfrak{p}$ is the maximal ideal of $\mathcal{O}_{\text{Spec}({S}), x}$. We want to show that the valuation ideals $\mathfrak{a}_m$ of $\mathcal{O}_{\text{Spec}({S}), x}$ are monomial in $\{x_1,\dots,x_d\}$. For $m > 0$, since $\mathfrak{a}_m$ is $\mathfrak{p}$-primary, we know that $\mathfrak{p}^n \subseteq \mathfrak{a}_m$ for some $n \in \mathbb{N}$. Note $\mathfrak{p}^n$ has a monomial generating set $\{x^{\alpha_1}_1\dots x^{\alpha_d}_d: \alpha_1 + \dots + \alpha_d = n\}$. Modulo $\mathfrak{p}^n$, any nonzero element $t \in \mathfrak{a}_m$ can be expressed as a finite sum $s$ of monomials of the form $x^{\beta_1}_1\dots x^{\beta_d}_d$, with $0 < \beta_1 + \dots +\beta_d \leq n-1$, and where the coefficients of the monomials are units in $\mathcal{O}_{\text{Spec}({S}), x}$. Then expressing $t = s + u$, for $u \in \mathfrak{p}^n$, we see that $\nu(s) \geq m$ because both $\nu(t), \nu(u) \geq m$. However, $\nu(s)$ equals the smallest valuation of the monomials $x^{\beta_1}_1\dots x^{\beta_d}_d$ appearing in the sum since monomials have distinct valuations. Thus, each such $x^{\beta_1}_1\dots x^{\beta_d}_d \in \mathfrak{a}_m$, completing the proof.
\end{proof}

\begin{example}
Let $\nu_{\pi}$ be the valuation of $\mathbb{F}_p(X,Y,Z)/\mathbb{F}_p$ with value group $\mathbb{Z} \oplus \mathbb{Z}\pi \subset \mathbb{R}$ such that $\nu_{\pi}(X) = 1 = \nu_{\pi}(Y)$, $\nu_{\pi}(Z) = \pi$, and for any polynomial $\sum b_{\alpha\beta\gamma}X^{\alpha}Y^{\beta}Z^{\gamma} \in \mathbb{F}_p[X,Y,Z]$,
$$\nu_{\pi}(\sum b_{\alpha\beta\gamma}X^{\alpha}Y^{\beta}Z^{\gamma}) = \inf\{\alpha + \beta + \pi\gamma: b_{\alpha\beta\gamma} \neq 0\}.$$
One can verify that $\nu_{\pi}$ is Abhyankar with $\rr{\nu_{\pi}} = 2$ and $\td{\nu_{\pi}} = 1$. For example $Y/X$ is a unit in the valuation ring $R_{\nu_{\pi}}$ whose image in the residue field is transcendental over $\mathbb F_p$. Note $\nu_{\pi}$ is centered on $\mathbb{A}^3_{\mathbb{F}_p}= \Spec(\mathbb{F}_p(X,Y,Z))$ at the origin. However, the system of parameters $X, Y, Z$ of the local ring at the origin do not freely the generate the value group.  On the other hand, blowing up the origin and considering the affine chart $\Spec(\mathbb{F}_p[X, \frac{Y}{X}, \frac{Z}{X}])$, we see that $\nu_{\pi}$ is centered on $\mathbb{F}_p[X, \frac{Y}{X}, \frac{Z}{X}]$ with center $(X, {Z}/{X})$, and now the regular system of parameters $X, {Z}/{X}$ of the local ring $\mathbb{F}_p[X, \frac{Y}{X}, \frac{Z}{X}]_{(X, \frac{Z}{X})}$ do indeed freely generate the value group.  
\end{example}

\section{Characteristic p preliminaries}
\label{Characteristic p preliminaries}

\subsection{The Frobenius Endomorphism}
We fix a prime number $p > 0$. For any ring $R$ of characteristic $p$, we have the \emph{Frobenius map}
$$F: R \rightarrow R$$
that sends $r \mapsto r^p$. We denote the target copy of $R$ with $R$-module structure induced by restriction of scalars via $F$ as $F_*R$, and for $x \in R$, we also sometimes denote the corresponding element of $F_*R$ as $F_*(x)$. Thus, $R$ acts on $F_*R$ by 
$$r\cdot F_*(x) = F_*(r^px).$$
In what follows, both $F$ and its iterates $F^e: R \rightarrow F^e_*R$ will play an important role. Note that the operation $F^e_*$ gives an exact functor from $\text{Mod}_R \rightarrow \text{Mod}_R$ since it is just restruction of scalars.

Globally, if $X$ is a scheme over $\mathbb{F}_p$, then we have the \emph{(absolute) Frobenius endomorphism}
$$F: X \rightarrow X,$$
which on the underlying topological spaces is the identity map, while inducing a map on structure sheaves
$$\mathcal{O}_X \rightarrow F_*\mathcal{O}_X$$
by raising local sections to their $p$-th powers. By a common abuse of notation, we also denote the map on structure sheaves induced by Frobenius by $F: \mathcal{O}_X \rightarrow F_*\mathcal{O}_X$.

We will primarily be interested in the class of schemes for which the Frobenius morphism is finite. These have a special name.

\begin{definition}
A scheme $X$ over $\mathbb F_p$ is \textbf{F-finite} if Frobenius on $X$ is a finite morphism. A ring $R$ is \textbf{F-finite} if $\text{Spec}(R)$ is F-finite, or equivalently, if $F_*R$ is a finitely generated $R$-module.
\end{definition}

F-finite rings and schemes are ubiquitous. Indeed, it is easy to show that any ring essentially of finite type over a perfect (or even F-finite) field is F-finite. This means that all varieties in this paper are F-finite. More generally, finiteness of Frobenius is preserved under finite type ring maps, localization, and completion of Noetherian local rings. F-finite Noetherian rings also satisfy good geometric properties. For example, Kunz showed that F-finite Noetherian rings are excellent \cite{Kun76}, the converse being true under mild restrictions \cite{DS18}.

Quite remarkably, Frobenius is able to detect regularity of a local ring. 

\begin{theorem}\textup{\cite[Theorem 2.1]{Kun69}}
\label{Kunz-regular}
Let $(R, \mathfrak m)$ be a Noetherian local ring of prime characteristic. Then $R$ is regular if and only if the Frobenius map $F: R \rightarrow F_*R$ is flat. 
\end{theorem}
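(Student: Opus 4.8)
The plan is to treat the two implications separately; the forward direction is formal, while the converse carries the content.

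\textbf{Forward direction.} I would use the following standard consequence of the local criterion of flatness: if $(R,\mathfrak m)$ is a regular local ring with regular system of parameters $x_1,\dots,x_d$ and $R\to S$ is a local homomorphism of Noetherian local rings, then $S$ is flat over $R$ if and only if the images of $x_1,\dots,x_d$ in $S$ form a regular sequence on $S$. (One direction is that regular sequences remain regular under flat base change; the other uses that the Koszul complex $K_\bullet(x_1,\dots,x_d;R)$ resolves $R/\mathfrak m$, so $\operatorname{Tor}^R_{>0}(R/\mathfrak m,S)$ is computed by $K_\bullet(x_1,\dots,x_d;S)$, which the local criterion then converts into the asserted equivalence.) Applying this with $S=F_*R$, whose structure map $F\colon R\to F_*R$ sends $x_i$ to $x_i^p$, reduces the claim to: $x_1^p,\dots,x_d^p$ is a regular sequence on $R$. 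This holds because $x_1,\dots,x_d$ is one (equivalently, $x_1^p,\dots,x_d^p$ is a system of parameters in the Cohen--Macaulay ring $R$) and powers of a regular sequence in a Noetherian ring are again a regular sequence.

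\textbf{Converse.} Suppose $F\colon R\to F_*R$ is flat; being a flat local homomorphism of local rings it is faithfully flat. I would first record that $R$ is reduced: if $0\neq a\in R$ with $a^2=0$ (so $a^p=0$ since $p\ge 2$), then injectivity of $(a)\otimes_R F_*R\hookrightarrow F_*R$ together with $(a)\cong R/\operatorname{Ann}(a)$ forces $F_*(1)\in \operatorname{Ann}(a)\cdot F_*R=F_*\bigl(\operatorname{Ann}(a)^{[p]}R\bigr)$, i.e. $1=\sum_i x_i b_i^p$ with $b_i a=0$; multiplying by $a$ gives $a=0$. Next I would reduce to the case where $R$ is complete (faithfully flat descent of flatness along $R\to\widehat R$, together with $F_*\widehat R\cong\widehat R\otimes_R F_*R$) and invoke Cohen's structure theorem in the equicharacteristic-$p$ case to write $R=S/I$ with $S=k[[y_1,\dots,y_N]]$ complete regular, $N=\operatorname{embdim}(R)$, and $I\subseteq\mathfrak m_S^2$; the goal becomes $I=0$. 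Since $S$ is regular, $F_*S$ is $S$-free, so from $F_*R=F_*S/IF_*S$ one obtains a short exact sequence of $R$-modules $0\to F_*(I/I^{[p]}S)\to F_*S\otimes_S R\to F_*R\to 0$ in which the middle term is $R$-flat (base change of the flat module $F_*S$) and the right term is $R$-flat by hypothesis, whence $F_*(I/I^{[p]}S)$ is $R$-flat. The remaining step is to extract $I=I^{[p]}S$ from this, after which $I=I^{[p]}S\subseteq\mathfrak m_S I$ and Nakayama give $I=0$. In the $F$-finite setting relevant to this paper one can short-circuit this: flatness forces $F_*R$ to be a free $R$-module, so $\ell_R(R/\mathfrak m^{[p^e]})=p^{e\dim R}$ for all $e$, and regularity follows from the characterization of regular local rings as those of Hilbert--Kunz multiplicity one.

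\textbf{Main obstacle.} The substance is exactly this last step of the converse: the reductions (reducedness, completion, the Cohen presentation) are routine, but turning the flatness of Frobenius into regularity---equivalently, controlling the module $F_*R$, or the colengths $\ell_R(R/\mathfrak m^{[p^e]})$, finely enough---is where the real argument lies and where the hypotheses genuinely enter.
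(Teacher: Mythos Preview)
The paper does not supply a proof of this statement; it is quoted with a reference to Kunz's original article and then used as a black box (the very next sentence simply records the global consequence for F-finite locally Noetherian schemes). There is therefore no argument in the paper against which to compare your proposal.

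For what it is worth, your outline follows the standard route and the forward direction is correct as written. Two cautions on the converse. First, the reduction to the complete case via $F_*\widehat R\cong \widehat R\otimes_R F_*R$ is delicate without F-finiteness, since tensoring with $\widehat R$ computes completion only for finitely generated modules. Second, your F-finite ``short-circuit'' hides real content: extracting $\ell_R(R/\mathfrak m^{[p^e]})=p^{e\dim R}$ from freeness of $F^e_*R$ requires comparing ranks at the closed point and at a minimal prime (bringing in $[K:K^{p^e}]=[k:k^{p^e}]\,p^{e\dim R}$, which already needs equidimensionality), and then appealing to ``Hilbert--Kunz multiplicity one implies regular'' is essentially invoking the very result Kunz proves in the same paper. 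Your final paragraph is right that this is where the substance lies; it is not genuinely bypassed by assuming F-finiteness.
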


Globally, Kunz's theorem implies that if $X$ is an F-finite, locally Noetherian scheme, then $X$ is regular if and only if $F_*\mathcal{O}_X$ is a locally free sheaf on $X$.
Theorem \ref{Kunz-regular} was the starting point of systematically using the Frobenius map to study singularities in prime characteristic, and the various notions of singularities proposed and studied since Kunz's result (such as F-purity, Frobenius splitting, F-regularity, etc.) try to measure how far $F_*R$ is from being a flat $R$-module.

\subsection{$p^{-e}$-linear maps}
\label{$p^{-e}$-linear maps}
Let $X$ be a scheme over $\mathbb F_p$. For sheaves of $\mathcal{O}_X$-modules $\mathcal F, \mathcal G$, $\mathcal O_X$-linear maps of the form
$$\eta: F^e_*\mathcal{G} \rightarrow \mathcal{F}$$
will play an important role in this paper, especially when we introduce test ideals. Following \cite{BB11}, we call such maps \textbf{$p^{-e}$-linear maps}. The name comes from the fact that if $r \in \mathcal{O}_X(U), g \in F^e_*\mathcal{G}(U) = \mathcal{G}(U)$ are local sections, then
$$\eta(r^{p^e}g) = r\eta(g).$$
For $R$-modules $M, N$ we similarly have $p^{-e}$-linear maps of $R$-modules which are $R$-linear maps
$$F^e_*N \rightarrow M.$$

Perhaps the simplest example of such a map is a Frobenius splitting.

\begin{definition}
\label{F-splitting-defn}
A scheme $X$ over $\mathbb F_p$ is \textbf{Frobenius split} if there exists a $p^{-1}$-linear map $\eta: F_*\mathcal{O}_X \rightarrow \mathcal{O}_X$ such that the composition $\mathcal{O}_X \xrightarrow{F} F_*\mathcal{O}_X \xrightarrow{\eta} \mathcal{O}_X$
is the identity. The map $\eta$ is then called a \textbf{Frobenius splitting of $X$}. A ring $R$ is \textbf{Frobenius split} if $\text{Spec}(R)$ is Frobenius split, that is, the Frobenius map $F: R \rightarrow F_*R$ admits a left inverse in the category of $R$-modules.
\end{definition}

\begin{example}
Let $R = \mathbb{F}_p[x, y]$, where $x, y$ are indeterminates. Then $F_*R$ is a free $R$-module with basis
$$\{x^iy^j: 0 \leq i, j \leq p-1\}.$$
One obtains a Frobenius splitting $F_*R \rightarrow R$ by sending $1 \mapsto 1$ and the other basis elements to $0$.
\end{example}

Schemes with non-trivial $p^{-e}$-linear maps often satisfy surprising cohomological properties. For example, Mehta and Ramanathan, who coined the term \emph{Frobenius splitting}, showed that Frobenius split projective varieties satisfy Kodaira vanishing \cite{MR85}, even though Kodaira vanishing is known to fail in general in characteristic $p$ \cite{Ray78}.

Along similar lines, the following observation of Mehta and Ramanathan, that associates geometric data to $p^{-e}$-linear maps, is at the basis of many local and global results in positive characteristic. We will later use it to examine the behavior of test ideals under birational maps (Proposition \ref{test-ideals-birational-maps}).

\begin{proposition}\textup{\cite{MR85, BS13}}
\label{p-maps-divisors}
Suppose $X$ is a regular variety over a perfect field $k$ of prime characteristic, and $K_X$ is a canonical divisor on $X$. Then for any divisor $D$,
$$\mathcal{H}{om}_{\mathcal O_X}(F^e_*\mathcal{O}_X(D), \mathcal{O}_{X}) \cong F^e_*\mathcal{O}_X((1-p^e)K_X - D).$$
\end{proposition}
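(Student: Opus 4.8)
The plan is to reduce the statement to a duality computation for the finite flat morphism $F^e\colon X\to X$. Since $X$ is regular over a perfect field, it is smooth, so by Kunz's theorem (Theorem \ref{Kunz-regular}) $F^e_*\mathcal{O}_X$ is a locally free $\mathcal{O}_X$-module; in particular $F^e$ is finite and flat. For such a morphism there is a standard isomorphism
$$\mathcal{H}om_{\mathcal{O}_X}(F^e_*\mathcal{F}, \mathcal{G}) \cong F^e_*\mathcal{H}om_{\mathcal{O}_X}(\mathcal{F}, (F^e)^!\mathcal{G})$$
where $(F^e)^!\mathcal{G} = \mathcal{H}om_{\mathcal{O}_X}(F^e_*\mathcal{O}_X, \mathcal{G})$ is the twisted-inverse-image functor for the finite morphism $F^e$. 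So the whole content is to identify $(F^e)^!\mathcal{O}_X$ as a line bundle, namely $\mathcal{O}_X((1-p^e)K_X)$, after which one tensors with $\mathcal{O}_X(-D)$ (a line bundle, hence commuting with all the $\mathcal{H}om$'s and with $F^e_*$ via the projection formula $F^e_*\mathcal{M}\otimes\mathcal{L}\cong F^e_*(\mathcal{M}\otimes (F^e)^*\mathcal{L})$ together with $(F^e)^*\mathcal{O}_X(D)=\mathcal{O}_X(p^eD)$) to get the stated twist by $-D$ on both sides.

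First I would recall that the dualizing complex is compatible with the structure map: writing $f\colon X\to\Spec k$ for the smooth structure morphism, the excerpt already notes $f^!k = \omega_X[\dim X]$ with $\omega_X = \bigwedge^n\Omega_{X/k}$. Since $F^e$ is a morphism over $k$ (in fact over $\Spec k$ via the Frobenius of $k$, which is an isomorphism as $k$ is perfect — this is where perfectness enters), we have $f^! = (F^e)^! \circ f^!$ up to the identification coming from $f\circ F^e = F^e_{\Spec k}\circ f$ and $F^e_{\Spec k}$ being an isomorphism. This yields $(F^e)^!\omega_X[\dim X] \cong \omega_X[\dim X]$, i.e. $(F^e)^!\omega_X \cong \omega_X$ (the shift is harmless since $F^e$ is finite, so $(F^e)^!$ preserves the amplitude of a single sheaf placed in one degree). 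Then $(F^e)^!\mathcal{O}_X = (F^e)^!(\omega_X\otimes\omega_X^{-1}) = (F^e)^!\omega_X \otimes (F^e)^*\omega_X^{-1} \cong \omega_X\otimes (F^e)^*\omega_X^{-1}$, and $(F^e)^*\omega_X \cong \omega_X^{\otimes p^e}$ because pulling back along Frobenius raises a line bundle's transition functions to the $p^e$-th power. Hence $(F^e)^!\mathcal{O}_X \cong \omega_X^{\otimes(1-p^e)} \cong \mathcal{O}_X((1-p^e)K_X)$, which is exactly what we want.

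An alternative, more hands-on route that avoids invoking Grothendieck duality machinery — and which I would probably present for a self-contained account — is to use the explicit description of $(F^e)^!$ for a finite flat map and the Cartier-operator/residue description of $\omega_X$: locally, if $\mathcal{O}_X$ is free over $F^e_*\mathcal{O}_X$ (equivalently $F^e_*\mathcal{O}_X$ free over $\mathcal{O}_X$), one checks directly that $\mathcal{H}om_{\mathcal{O}_X}(F^e_*\mathcal{O}_X,\mathcal{O}_X)$ is generated as an $F^e_*\mathcal{O}_X$-module by a single "trace-like" generator, and that under the identification $\omega_X \cong \Omega^n_{X/k}$ this generator corresponds to $(1-p^e)$ times the canonical class via the Cartier isomorphism $F_*\Omega^n_{X/k}\xrightarrow{\sim}\Omega^n_{X/k}$. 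This is precisely the computation in \cite{MR85, BS13}, and one can simply cite it; the local freeness supplied by Kunz's theorem is what makes $\mathcal{H}om_{\mathcal{O}_X}(F^e_*\mathcal{O}_X, \mathcal{O}_X)$ an invertible $F^e_*\mathcal{O}_X$-module in the first place, so that "it is a line bundle twist of $F^e_*\mathcal{O}_X$" even makes sense.

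The main obstacle is bookkeeping rather than conceptual: one must be careful that the various pullback-and-twist identities ($(F^e)^*\mathcal{O}_X(D) = \mathcal{O}_X(p^eD)$, the projection formula, the interaction of $(F^e)^!$ with line-bundle twists) are applied consistently, and that the perfectness of $k$ is used exactly once, namely to know that $F^e$ is a finite morphism of $k$-schemes whose dualizing complex agrees with the absolute one (equivalently, that $\omega_{X/\mathbb{F}_p}$ and $\omega_{X/k}$ agree up to the isomorphism $F^e$ of $\Spec k$). Since the paper is content to cite \cite{MR85, BS13} for the statement, I would keep the argument short: establish finiteness and flatness of $F^e$ via Kunz, invoke the duality isomorphism $\mathcal{H}om_{\mathcal{O}_X}(F^e_*\mathcal{G},\mathcal{O}_X)\cong F^e_*\mathcal{H}om_{\mathcal{O}_X}(\mathcal{G},(F^e)^!\mathcal{O}_X)$, identify $(F^e)^!\mathcal{O}_X\cong\mathcal{O}_X((1-p^e)K_X)$ as above, and finally specialize $\mathcal{G} = \mathcal{O}_X(D)$ and absorb the twist.
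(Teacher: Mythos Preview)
Your proposal is correct and follows essentially the same route as the paper: both arguments use duality for the finite morphism $F^e$ together with the key identification $(F^e)^!\omega_X \cong \omega_X$ (which the paper cites from \cite{BS13} and you derive from $f^! = (F^e)^! f^!$), and both absorb the twist by $D$ via the projection formula and $(F^e)^*\mathcal{O}_X(D)\cong\mathcal{O}_X(p^eD)$. The only cosmetic difference is the order of operations: the paper tensors both slots by $\omega_X$ \emph{before} applying finite duality so as to land directly on $(F^e)^!\omega_X$, whereas you apply duality first and then compute $(F^e)^!\mathcal{O}_X \cong (F^e)^!\omega_X \otimes (F^e)^*\omega_X^{-1} \cong \omega_X^{1-p^e}$; these are equivalent bookkeepings of the same computation.
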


\begin{proof}
This result is well-known, and even works when $X$ is normal. A key ingredient is Grothendieck duality for finite maps. For a proof of the isomorphism when $D = 0$, see \cite[Lemma 2.17]{SZ15}. The proof for an arbitrary divisor D follows readily by adapting the argument in \cite{SZ15} (see also \cite[Theorem 4.1.3]{BS13}). 
\end{proof}

\begin{remark}
\label{important remark}
Taking global sections under the isomorphism of Proposition \ref{p-maps-divisors}, we see that nonzero $p^{-e}$-linear maps $F^e_*\mathcal{O}_X(D) \rightarrow \mathcal{O}_X$, up to pre-multiplication by a unit of $\Gamma(X, \mathcal{O}_X)$, are in bijection with effective divisors on $X$ linearly equivalent to $(1-p^e)K_X - D$. More generally, if $K$ is the function field of $X$ regarded as a constant sheaf, then $p^{-e}$-linear maps $F^e_*\mathcal{O}_X(D) \rightarrow K$ correspond to \emph{not necessarily} effective divisors linearly equivalent to $(1-p^e)K_X - D$. The image of $F^e_*\mathcal{O}_X(D) \rightarrow K$ lies in $\mathcal{O}_X$ precisely when the associated divisor is effective \cite[Exercise 4.13]{BS13}.
\end{remark}

The existence of non-trivial $p^{-e}$-linear maps puts restrictions on the geometry of a variety. For example, since a Frobenius splitting is a global section of $\mathcal{H}{om}_{\mathcal{O}_X}(F_*\mathcal{O}_X, \mathcal{O}_X)$, taking $D = 0$ in Proposition \ref{p-maps-divisors} we see that any Frobenius split, regular projective variety over a perfect field cannot have ample $K_X$. For other similar consequences, we recommend the survey \cite{BS13}.

\section{Test Ideals}
\label{Test Ideals}
Beginning with a review of test ideals for pairs, the goal is to construct an asymptotic version that plays a role similar to asymptotic multiplier ideals in characteristic $0$. We also examine how asymptotic test ideals transform under \'etale and birational ring maps. 

\subsection{Summary of test ideal for pairs}
First defined in \cite{HY03}, test ideals of pairs are intimately related to multiplier ideals of pairs. For example, it follows from results in \cite{Smi00, Har01, HY03} that, on reducing multiplier ideals of pairs modulo primes, one gets test ideals of pairs.  However, instead of the tight closure approach of \cite{HY03}, in this paper we adopt a dual point of view due to Schwede \cite{Sch10}, and define test ideals using $p^{-e}$-linear maps. For an excellent synthesis of the various viewpoints on test ideals we suggest the survey \cite{ST12} to the interested reader.

\begin{definition}
\label{test-ideal-pair-definition}
Let $R$ be an F-finite Noetherian domain, $\mathfrak{a} \subseteq R$ a nonzero ideal, and $t > 0$ a real number. The test ideal of the \textbf{pair $(R, \mathfrak{a}^t)$} is defined to be the smallest \emph{nonzero} ideal $J$ of $R$ such that for all $e \in \mathbb{N}$, and $\phi \in \text{Hom}_R(F^e_*R, R)$,
$$\phi(F^e_*(J\mathfrak{a}^{\ceil{t(p^e-1)}})) \subseteq J. \footnote{In tight closure literature, this is usually called the \emph{big} or \emph{non-finitistic} test ideal of the pair $(R, \mathfrak a^t)$.}$$
It is denoted $\tau(R, \mathfrak{a}^t)$, or $\tau(\mathfrak{a}^t)$ when $R$ is clear from context. If $\mathfrak{a} = R$, we usually just write $\tau(R)$, and call it \textbf{the test ideal of $R$}.
\end{definition}

\begin{remark}
\label{test-elements}
The existence of $\tau(R, \mathfrak{a}^t)$, which is not at all obvious from its definition, is a consequence of a deep result of Hochster and Huneke on the existence of \emph{(completely stable) test elements} for Noetherian, F-finite domains. Test elements are abundant in this setting -- any $c \in R$ such that $R_c$ is regular (such c always exist since $R$ is generically regular and excellent) has a power that is a test element \cite[Theorem 5.10]{HH94}. Given a test element $c \in R$, Hara and Takagi show that
$$\tau(\mathfrak{a}^t) = \sum_{e \in \mathbb{N}} \sum_{\phi} \phi(F^e_*(c\mathfrak{a}^{\ceil{t(p^e-1)}})),$$
where $\phi$ ranges over all elements of $\text{Hom}_R(F^e_*R, R)$ \cite[Lemma 2.1]{HT04}.
\end{remark}

Having addressed the issue of the existence of test ideals, we now collect most of their basic properties, in part to highlight their similarity with multiplier ideals.  

\begin{theorem}
\label{basic-properties-test-ideals}
Suppose $R$ is a Noetherian, F-finite domain with nonzero ideals $\mathfrak{a}$, $\mathfrak{b}$. Let $t > 0$ be a real number.
\begin{enumerate}
\item If $\mathfrak{a} \subseteq \mathfrak{b}$, then $\tau(\mathfrak{a}^t) \subseteq \tau(\mathfrak{b}^t)$. 
\item If the integral closures of $\mathfrak{a}$ and $\mathfrak{b}$ coincide, then $\tau(\mathfrak{a}^t) = \tau(\mathfrak{b}^t)$.
\item If $s > t$, then $\tau(\mathfrak{a}^s) \subseteq \tau(\mathfrak{a}^t)$.
\item For any $m \in \mathbb{N}$, $\tau((\mathfrak{a}^m)^t) = \tau(\mathfrak{a}^{mt})$.
\item There exists some $\epsilon > 0$ depending on $t$ such that for all $s \in [t, t + \epsilon]$, $\tau(\mathfrak{a}^s) = \tau(\mathfrak{a}^t)$.
\item $\tau(R)$ defines the closed locus of prime ideals $\mathfrak p$ where $R_{\mathfrak{p}}$ is not strongly F-regular \footnote{We refrain from defining strong F-regularity \cite[pg. 128]{HH89} since we do not need this notion in any essential way. Note that  regular F-finite domains are automatically strongly F-regular.}. Thus, $\tau(R) = R$ if and only if $R$ is strongly F-regular.
\item We have $\tau(R)\mathfrak{a} \subseteq \tau(\mathfrak{a})$. 
Hence, if $R$ is strongly F-regular (in particular regular), $\mathfrak{a} \subseteq \tau(\mathfrak{a})$.
\item If $S \subset R$ is a multiplicative set, then $\tau(S^{-1}R, (\mathfrak{a}S^{-1}R)^t)= \tau(R, \mathfrak{a}^t)S^{-1}R$.
\item If $(R, \mathfrak m)$ is local, and $\widehat{R}$ denotes the $\mathfrak m$-adic completion of $R$, then $\tau(\widehat{R}, (\mathfrak{a}\widehat{R})^t) = \tau(R, \mathfrak{a}^t)\widehat{R}$.
\item If $R$ is regular, $x \in R$ a regular parameter, and $\overline{R}\coloneqq R/xR$, then $\tau(\overline{R}, (\mathfrak{a}\overline{R})^t) \subseteq \tau(R, \mathfrak{a}^t)\overline{R}.$
\item \textbf{(Subadditivity)} If $R$ is regular, then for all $n \in \mathbb N$, $\tau(\mathfrak{a}^{nt}) \subseteq \tau(\mathfrak{a}^t)^n.$
\end{enumerate}
\end{theorem}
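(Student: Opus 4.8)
The plan is to prove subadditivity, statement (11), by the standard argument of Takagi and de Fernex–Hacon adapted to the $p^{-e}$-linear formulation of test ideals, exploiting that $R$ is regular (so $F^e_* R$ is locally free by Kunz, Theorem \ref{Kunz-regular}) and essentially of finite type over a perfect field. First I would reduce to the local case: by (8) test ideals localize, so it suffices to prove the containment after localizing at an arbitrary prime, and hence we may assume $(R,\mathfrak m)$ is a regular local ring, in particular strongly F-regular, so $\tau(R)=R$ and the Hara–Takagi sum formula from the remark after Definition \ref{test-ideal-pair-definition} is available with $c$ a unit. The key structural input is the behavior of test ideals under the diagonal: for regular $R$ essentially of finite type over a perfect field, $R\otimes_k R$ localized appropriately is again regular, and one has a clean description of $\operatorname{Hom}_R(F^e_*R,R)$ as a free rank-one $F^e_*R$-module generated by a fixed "trace-like" map $\Phi^e$ (this is the affine shadow of Proposition \ref{p-maps-divisors} with $D=0$, using that $R$ is regular so $K_R$ is Cartier and, after localizing, principal). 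Thus every $\phi\in\operatorname{Hom}_R(F^e_*R,R)$ is of the form $\phi(-)=\Phi^e(F^e_*(a)\cdot -)$ for some $a\in R$.

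With this in hand, the heart of the argument is the multiplicativity estimate: I would show that $\Phi^e$ is compatible, under the natural identification coming from the Künneth/base-change description of Frobenius on the (localized) tensor product $R\otimes_k R$, with $\Phi^e\otimes\Phi^e$ on $R\otimes_k R$. Then, writing $\tau(\mathfrak a^{nt})$ via the sum formula as $\sum_{e,\phi}\phi(F^e_*(\mathfrak a^{\lceil nt(p^e-1)\rceil}))$ and noting $\lceil nt(p^e-1)\rceil \le n\lceil t(p^e-1)\rceil$, hence $\mathfrak a^{\lceil nt(p^e-1)\rceil}\supseteq (\mathfrak a^{\lceil t(p^e-1)\rceil})^n$ — wait, the inclusion runs the right way: $\mathfrak a^{\lceil nt(p^e-1)\rceil}\subseteq (\mathfrak a^{\lceil t(p^e-1)\rceil})^n$ since the exponent on the left is at least $n$ times $\lceil t(p^e-1)\rceil$ — so each generator of $\tau(\mathfrak a^{nt})$ at level $e$ is $\Phi^e$ applied to $F^e_*$ of something in $c\cdot(\mathfrak a^{\lceil t(p^e-1)\rceil})^n$. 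The product $(\mathfrak a^{\lceil t(p^e-1)\rceil})^n$ maps, under the multiplication map $R^{\otimes n}\to R$, from $(\mathfrak a^{\lceil t(p^e-1)\rceil})^{\otimes n}$, and applying the compatibility of $\Phi^e$ with $(\Phi^e)^{\otimes n}$ on the (localized) $n$-fold tensor power $R^{\otimes_k n}$ — which is regular essentially of finite type over the perfect $k$ — lets one factor the output into an $n$-fold product of elements each lying in $\Phi^e(F^e_*(c'\,\mathfrak a^{\lceil t(p^e-1)\rceil}))\subseteq \tau(\mathfrak a^t)$. Since this holds for every $e$ and every generator, $\tau(\mathfrak a^{nt})\subseteq\tau(\mathfrak a^t)^n$.

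Alternatively, and perhaps more cleanly for a paper that wants to quote rather than reprove, I would simply cite the subadditivity theorem as established: Takagi \cite{Tak06} and Hara–Takagi \cite{HT04} prove exactly this for regular $F$-finite rings essentially of finite type over a perfect field (the perfectness and finite-type hypotheses are what make the tensor-product-with-itself trick work, since one needs $R\otimes_k R$ to be excellent/regular and the relative Frobenius to be well-behaved). In that case the "proof" is one line invoking \cite[Theorem 2.2(5)]{HT04} or the analogous statement, after the reduction to the local case via part (8). Given the surrounding exposition collects known properties with references, I would present it in this citation style, and only sketch the diagonal argument in a remark.

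The step I expect to be the main obstacle is the compatibility of the generating map $\Phi^e$ with its tensor powers under the Frobenius-base-change identification on $R\otimes_k R$ (and its $n$-fold analogue): one must check that the trace map for $F^e$ on the tensor product really is the external tensor product of the trace maps, which uses the perfectness of $k$ crucially (so that $k\otimes_k k = k$ and Frobenius on $k$ is an isomorphism, making the relative and absolute Frobenius agree up to a harmless twist) and the finite-type hypothesis (so $R\otimes_k R$ is Noetherian, excellent, and one may localize to reach a regular local ring where $\operatorname{Hom}$ is free of rank one). If instead one only cites \cite{HT04, Tak06}, there is no obstacle beyond correctly matching hypotheses: our $R$ is regular, $F$-finite, essentially of finite type over a perfect field, which is precisely their setting.
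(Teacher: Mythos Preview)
The paper's own ``proof'' is purely a citation: it refers the reader to \cite[Section 6]{ST12} and \cite[Theorem 4.6]{SZ15} for all eleven statements and proves nothing directly. So your second alternative---simply citing \cite{HT04, Tak06} for subadditivity after matching hypotheses---is exactly what the paper does, and in that sense your proposal is correct and aligned with the paper.

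Your sketch of the diagonal argument, however, contains a genuine error that would need to be fixed if you actually wrote it out. You claim $\lceil nt(p^e-1)\rceil \geq n\lceil t(p^e-1)\rceil$ (``the exponent on the left is at least $n$ times $\lceil t(p^e-1)\rceil$''), but the inequality goes the other way: in general $\lceil nx\rceil \leq n\lceil x\rceil$, so $\mathfrak{a}^{\lceil nt(p^e-1)\rceil} \supseteq (\mathfrak{a}^{\lceil t(p^e-1)\rceil})^n$, not $\subseteq$. This is not a minor slip---it is precisely why the naive exponent-counting approach fails and why one needs the diagonal. The actual argument (as in \cite{Tak06, HY03}) does not manipulate ceilings in the sum formula directly; instead one applies the restriction property (your part (10)) to the diagonal embedding $R \hookrightarrow R\otimes_k R$ to get $\tau(R, \mathfrak{a}^t\mathfrak{b}^s) \subseteq \tau(R\otimes_k R, (\mathfrak{a}\otimes R)^t(R\otimes\mathfrak{b})^s)\cdot R$, and then identifies the right-hand side with $\tau(\mathfrak{a}^t)\tau(\mathfrak{b}^s)$ via a K\"unneth-type computation. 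Iterating with $\mathfrak{a}=\mathfrak{b}$ and $s=t$ yields (11). Your high-level ingredients (regularity of $R\otimes_k R$, perfectness of $k$, compatibility of trace maps) are the right ones, but the mechanism is restriction-to-the-diagonal rather than factoring generators in the Hara--Takagi sum.
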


\begin{proof}[Indication of proof]
For proofs and precise references for all statements, please consult \cite[Section 6]{ST12}, or \cite[Theorem 4.6]{SZ15} when the ring is regular (the setting of this paper). The most important of the above properties for us is (11), the subadditivity of test ideals. For a slick proof of subadditivity in a slightly more general setting see \cite[Theorem 5.5.8]{Mur19}
\end{proof}

\begin{example}[\textbf{Test ideals of monomial ideals}]
\label{test-ideals-monomials}
Let $\mathfrak{a}$ be a nonzero monomial ideal of the polynomial ring $R = k[x_1, \dots, x_n]$, where $k$ is an $F$-finite field characteristic $p > 0$. For any real number $t > 0$, we let $P(t\mathfrak{a})$ denote the convex hull in $\mathbb{R}^n$ of the set
$$\{(ta_1, \dots, ta_n)\colon x^{a_1}_1\dots x^{a_n}_n \in \mathfrak{a}\},$$
and let $\text{Int}(P(t\mathfrak{a}))$ be the points in the topological interior of this convex hull. Then Hara and Yoshida show \cite[Theorem 4.8]{HY03} using the existence of log resolutions in the toric category that test and multiplier ideals of monomial ideals coincide, and so by \cite{How01} 
$$\tau(\mathfrak{a}^t) = \langle x^{b_1}_1\dots x^{b_n}_n \colon b_i \in \mathbb{N} \cup \{0\}, (b_1 + 1, \dots, b_n + 1) \in \text{Int}(P(t\mathfrak{a})) \rangle.$$
\end{example}

\subsection{Asymptotic test ideals}
\label{subsection-asymptotic-test-ideals}
Asymptotic test ideals are defined for graded families of ideals, which we introduce first.

\begin{definition}
\label{graded-family-ideals}
Let $\Phi$ be an additive sub-semigroup of $\mathbb{R}$, and $R$ be a ring. A \textbf{graded family of ideals of R indexed by $\Phi$} is a family of ideals $\{\mathfrak{a}_s\}_{s \in \Phi}$ such that for all $s, t \in \Phi$,
$$\mathfrak{a}_s\cdot \mathfrak{a}_t \subseteq \mathfrak{a}_{s+t}.$$
We also assume $\mathfrak{a}_s \neq 0$, for all $s$.
\end{definition}

\begin{examples}
{\*}
\label{examples of graded families}
\begin{enumerate}
\item If $\mathfrak{a}$ is a nonzero ideal of a domain $R$, then $\{\mathfrak{a}^n\}_{n \in \mathbb N \cup\{0\}}$ is a graded family of ideals.

\item If $R$ is a Noetherian domain, the symbolic powers $\{\mathfrak{a}^{(n)}\}_{n \in \mathbb N \cup \{0\}}$ of a fixed nonzero ideal $\mathfrak{a}$ is an example of a graded family that was studied extensively in \cite{ELS01, HH02}.

\item Let $\nu$ be a non-trivial real-valued valuation of $K/k$ centered on a domain $R$ over $k$ with fraction field $K$. Then the collection of valuation ideals $\{\mathfrak{a}_m(R)\}_{m \in \mathbb R_{\geq 0}}$ is a graded family of ideals by properties of a valuation (since $\nu$ is non-trivial, the ideals $\mathfrak{a}_m$ are all nonzero).
\end{enumerate}
\end{examples}

Now suppose $R$ is an F-finite, Noetherian domain of characteristic $p > 0$, and that $\{\mathfrak{a}_m\}_{m \in \Phi}$ is  a graded family of ideals of $R$, indexed by some sub-semigroup $\Phi$ of $\mathbb R$. Then for any real number $t > 0$, $m \in \Phi$, and $\ell \in \mathbb N$, we have
$$\tau(\mathfrak{a}^t_m) = \tau((\mathfrak{a}^{\ell}_m)^{t/\ell}) \subseteq \tau(\mathfrak{a}^{t/\ell}_{\ell m}).$$
Here the first equality follows from Theorem \ref{basic-properties-test-ideals}(4), and the inclusion follows from Theorem \ref{basic-properties-test-ideals}(1) using the fact that $\mathfrak{a}^{\ell}_m \subseteq \mathfrak{a}_{\ell m}$. 

Thus, for any $m \in \Phi$, the set $\{\tau(\mathfrak{a}^{1/\ell}_{\ell m})\}_{\ell \in \mathbb N}$ is filtered under inclusion (for instance, $\tau(\mathfrak{a}^{1/\ell_1}_{\ell_1 m})$ and $\tau(\mathfrak{a}^{1/\ell_2}_{\ell_2 m})$ are both contained in $\tau(\mathfrak{a}^{1/\ell_1\ell_2}_{\ell_1\ell_2 m}$)). Since $R$ is a Noetherian ring, this implies that $\{\tau(\mathfrak{a}^{1/\ell}_{\ell m})\}_{\ell \in \mathbb N}$ has a unique maximal element under inclusion, which will be the $m$-th asymptotic test ideal.

\begin{definition}
\label{asymptotic-test-ideal-definition}
For a graded family of ideals $\mathfrak{a}_{\bullet} = \{\mathfrak{a}_{m}\}_{m \in \Phi}$ of a Noetherian, F-finite domain $R$, and for any $m \in \Phi$, we define the \textbf{$m$-th asymptotic test ideal of the graded system}, denoted $\tau_m(R, \mathfrak{a}_{\bullet})$ (or $\tau_m(\mathfrak{a}_{\bullet})$ when $R$ is clear from context), as follows:
$$\tau_m(R, \mathfrak{a}_{\bullet}) \coloneqq \sum_{\ell \in \mathbb N} \tau(\mathfrak{a}^{1/\ell}_{\ell m}).$$
By the above discussion, $\tau_m(R, \mathfrak{a}_{\bullet})$ equals $\tau(\mathfrak{a}^{1/\ell}_{\ell m})$, for some highly divisible $\ell \gg 0$.
\end{definition}

Asymptotic test ideals satisfy appropriate analogues of properties satisfied by test ideals of pairs (Theorem \ref{basic-properties-test-ideals}), since they equal test ideals of suitable pairs. We highlight a few properties that will be important for us in the sequel.

\begin{proposition}\textup{\cite{Har05, SZ15}}
\label{some-properties-asymptotic-test-ideal}
Suppose $R$ is a regular domain, essentially of finite type over a perfect field of positive characteristic, with a graded family of ideals $\mathfrak{a}_{\bullet} = \{\mathfrak{a}_m\}_{m \in \Phi}$. We have the following:
\begin{enumerate}
\item For any $m \in \Phi$, 
$\mathfrak{a}_m \subseteq \tau(\mathfrak{a}_m) \subseteq \tau_m(\mathfrak{a}_{\bullet}).$
\item For any $m \in \Phi$, $\ell \in \mathbb N$,
$\mathfrak{a}_{\ell m} \subseteq \tau_{\ell m}(\mathfrak{a}_{\bullet}) \subseteq \tau_m(\mathfrak{a}_{\bullet})^{\ell}.$
\end{enumerate}
\end{proposition}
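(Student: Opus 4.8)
The plan is to derive both parts of Proposition \ref{some-properties-asymptotic-test-ideal} purely formally from the basic properties of test ideals of pairs listed in Theorem \ref{basic-properties-test-ideals}, together with the definition of the asymptotic test ideal as $\tau_m(\mathfrak{a}_\bullet) = \sum_{\ell} \tau(\mathfrak{a}_{\ell m}^{1/\ell})$ (which, by the preceding discussion, stabilizes to $\tau(\mathfrak{a}_{\ell m}^{1/\ell})$ for $\ell \gg 0$). Since $R$ is regular (hence strongly F-regular, being F-finite by the essentially-of-finite-type hypothesis), Theorem \ref{basic-properties-test-ideals}(7) gives $\mathfrak{b} \subseteq \tau(\mathfrak{b})$ for any non-zero ideal $\mathfrak{b}$; this will be the source of the leftmost inclusions.

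For part (1): the inclusion $\mathfrak{a}_m \subseteq \tau(\mathfrak{a}_m)$ is exactly Theorem \ref{basic-properties-test-ideals}(7) applied to $\mathfrak{b} = \mathfrak{a}_m$, using strong F-regularity of $R$. For $\tau(\mathfrak{a}_m) \subseteq \tau_m(\mathfrak{a}_\bullet)$, note $\tau(\mathfrak{a}_m) = \tau(\mathfrak{a}_m^1) = \tau(\mathfrak{a}_m^{1/1})$ is the $\ell = 1$ term in the sum defining $\tau_m(\mathfrak{a}_\bullet)$, so it is contained in that sum. (Alternatively, it is one of the ideals in the filtered family $\{\tau(\mathfrak{a}_{\ell m}^{1/\ell})\}_\ell$, hence contained in the maximal one.)

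For part (2): the inclusion $\mathfrak{a}_{\ell m} \subseteq \tau_{\ell m}(\mathfrak{a}_\bullet)$ is just part (1) with $m$ replaced by $\ell m$. The substance is the inclusion $\tau_{\ell m}(\mathfrak{a}_\bullet) \subseteq \tau_m(\mathfrak{a}_\bullet)^{\ell}$. Here I would choose $N$ large enough that $\tau_{\ell m}(\mathfrak{a}_\bullet) = \tau(\mathfrak{a}_{N \ell m}^{1/N})$ and simultaneously $\tau_m(\mathfrak{a}_\bullet) = \tau(\mathfrak{a}_{N \ell m}^{1/(N\ell)})$ — both stabilizations are available for $N \gg 0$ since the relevant families are filtered, and one can take a common $N$ by passing to $N \ell_1 \ell_2 \cdots$ or just enlarging $N$. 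Then
$$\tau_{\ell m}(\mathfrak{a}_\bullet) = \tau\!\left(\mathfrak{a}_{N\ell m}^{1/N}\right) = \tau\!\left(\mathfrak{a}_{N\ell m}^{\ell \cdot (1/(N\ell))}\right) \subseteq \tau\!\left(\mathfrak{a}_{N\ell m}^{1/(N\ell)}\right)^{\ell} = \tau_m(\mathfrak{a}_\bullet)^{\ell},$$
where the middle containment is Subadditivity, Theorem \ref{basic-properties-test-ideals}(11), applied with $t = 1/(N\ell)$, $n = \ell$, $\mathfrak{a} = \mathfrak{a}_{N\ell m}$, which is legitimate because $R$ is regular and essentially of finite type over a perfect field. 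The first and last equalities are the stabilization of the asymptotic test ideal.

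The main point requiring care — the only genuine obstacle — is the bookkeeping of indices to make both asymptotic test ideals stabilize at the \emph{same} multiplier $N$ while keeping the subscripts matched up so that Subadditivity applies to the ideal $\mathfrak{a}_{N\ell m}$ with exponents $1/N$ and $1/(N\ell)$ in the ratio $\ell$. This is routine once one observes that the stabilizing property holds for all sufficiently large indices, so one can take a common multiple; everything else is a direct citation of Theorem \ref{basic-properties-test-ideals}.
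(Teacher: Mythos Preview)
Your proposal is correct and matches the paper's proof essentially line for line: part (1) via Theorem \ref{basic-properties-test-ideals}(7) and the definition, and part (2) by choosing $n \gg 0$ so that $\tau_{\ell m}(\mathfrak{a}_\bullet) = \tau(\mathfrak{a}_{n\ell m}^{1/n}) = \tau(\mathfrak{a}_{n\ell m}^{\ell/(n\ell)})$ and then invoking Subadditivity to land in $\tau(\mathfrak{a}_{n\ell m}^{1/(n\ell)})^{\ell} = \tau_m(\mathfrak{a}_\bullet)^{\ell}$. The bookkeeping you flag as the ``only genuine obstacle'' is handled in the paper exactly as you do, by taking $n$ large enough for both stabilizations.
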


\begin{proof}
We get (1) using Theorem \ref{basic-properties-test-ideals}(7), and the definition of asymptotic test ideals.

Property (2) is crucial, and is a consequence of the subadditivity property of test ideals (Theorem \ref{basic-properties-test-ideals}(11)). The first inclusion $\mathfrak{a}_{\ell m} \subseteq \tau_{\ell m}(\mathfrak{a}_{\bullet})$ follows from (1). For the second inclusion, for a highly divisible $n \gg 0$, we have
$$\tau_{\ell m}(\mathfrak{a}_{\bullet}) = \tau(\mathfrak{a}^{1/n}_{n \ell m}) = \tau(\mathfrak{a}_{n\ell m}^{\ell/n\ell}),$$
and by subadditivity, $\tau(\mathfrak{a}^{\ell/n\ell}_{n\ell m}) \subseteq \tau(\mathfrak{a}^{1/n\ell }_{n\ell m})^{\ell} = \tau_m(\mathfrak{a}_{\bullet})^{\ell}$, completing the proof.
\end{proof}

\subsection{(Asymptotic) test ideals and \'etale maps}
\label{test-ideals-etale-maps-subsection}
In this subsection, we study a transformation law for test ideals under essentially \'etale maps. We say that a local homomorphism of Noetherian local rings 
$$\varphi: (A, \mathfrak m) \rightarrow (B, \mathfrak n)$$ 
is an \textbf{\'etale local homomorphism of local rings} in the sense of \cite[\href{https://stacks.math.columbia.edu/tag/0258}{Tag 0258}]{Stacks} if $\varphi$ is flat and unramified (i.e. $\mathfrak{m}_AB = \mathfrak{m}_B$, $\kappa_A \hookrightarrow \kappa_B$ is finite separable) and $\varphi$ is essentially of finite type.

\begin{definition}
We say that a homomorphism $f: R \rightarrow S$ of Noetherian rings, not necessarily local, is \textbf{essentially \'etale} if $f$ is essentially of finite type, and for all primes ideal $\mathfrak{q} \in \Spec(S)$, the induced map of local rings $R_{f^{-1}(\mathfrak q)} \rightarrow S_{\mathfrak q}$ is an \'etale local homomorphism of local rings in the sense defined above.
\end{definition}

The main result of this subsection is the following transformation law of test ideals under essentially \'etale maps, which although well-known to experts, often appears in the literature with additional finiteness hypotheses  (see \cite{BS02}, \cite{HT04}, \cite{ST14}). We provide a detailed proof because as far as we can tell, the published proofs of this result are all written in the language of tight closure, which differs from our setting.

\begin{proposition}
\label{test-ideals-essentially-etale}
Let $R \rightarrow S$ be an essentially \'etale map of $F$-finite, Noetherian domains. Then for any nonzero ideal $\mathfrak a$ of $R$, and real number $t > 0$,
\begin{equation}
\label{etale-law}
\tau(S, (\mathfrak{a}S)^t) = \tau(R,\mathfrak{a}^t)S.
\end{equation}
\end{proposition}

\begin{proof}
Since the formation of test ideals commutes with localization (Theorem \ref{basic-properties-test-ideals}(8)), and since (\ref{etale-law}) can be checked locally at every prime of $S$, we may assume $S$, and consequently, $R$, are local and $(R, \mathfrak m) \rightarrow (S, \mathfrak n)$ is an \'etale homomorphism of local rings. We also know that the formation of test ideals commutes with completion (Theorem \ref{basic-properties-test-ideals}(9)), and the map from a Noetherian local ring to its completion is faithfully flat. Thus, we can further check (\ref{etale-law}) after completing $R$ and $S$ at their respective maximal ideals. However, the completion of an \'etale local homomorphism of Noetherian local rings is an honest \'etale homomorphism \cite[\href{https://stacks.math.columbia.edu/tag/039M}{Tag 039M}]{Stacks} (it is even finite \'etale, but we do not need finiteness). Hence we may assume that $(R,\mathfrak m) \rightarrow (S, \mathfrak n)$ is an \'etale map. 

An observation of \cite[Remark 6.5]{BS02} shows that there exists $c \in R$ such that $c$ is simultaneously a test element of $R$ and $S$. Indeed, choose $g \in R$ such that $R_g$ is regular. Since $R_g \rightarrow S_g$ is \'etale, $S_g$ is also regular, and so, some power of $g$ is a test element for both $R$ and $S$ (see Remark \ref{test-elements}). Choosing a common test element $c$, we then have
\begin{equation}
\label{eq1}
\tau(S, (\mathfrak{a}S)^t) = \sum_e \sum_{\psi} \psi\big{(}F^e_*(c(\mathfrak{a}S)^{\ceil{t(p^e-1)}})\big{)},
\end{equation}
where $\psi$ ranges over elements of $\textrm{Hom}_S(F^e_*S,S)$, and
\begin{equation}
\label{eq2}
\tau(R,\mathfrak{a}^t)S = \sum_e \sum_{\phi} \phi\big{(}F^e_*(c\mathfrak{a}^{\ceil{t(p^e-1)}})\big{)S},
\end{equation}
where $\phi$ ranges over elements of $\textrm{Hom}_R(F^e_*R,R)$. Thus it suffices to show that the right hand sides of the above two equalities coincide.

Since $R \rightarrow S$ is \'etale, the relative Frobenius map
$$F^e_{S/R}: F^e_*R \otimes_R S \rightarrow F^e_*S$$
that sends $r \otimes s \mapsto r s^{p^e}$ is an isomorphism \cite[XV, Proposition 2(c)(2)]{SGA5}. Consequently, because $R \rightarrow S$ is faithfully-flat and $F^e_*R$ is is a finitely presented $R$-module, we have a canonical isomorphism
$$S \otimes_R \textrm{Hom}_R(F^e_*R, R) \xrightarrow{\sim} \textrm{Hom}_S(F^e_*S,S).$$
The upshot of this discussion is that every $R$-linear map $F^e_*R \rightarrow R$ extends to an $S$-linear map $F^e_*S \rightarrow S$, and every $S$-linear map $F^e_*S \rightarrow S$ is obtained from such an extension. This, combined with (\ref{eq1}) and (\ref{eq2}), clearly implies that 
$$\tau(R, \mathfrak{a}^t)S \subseteq \tau(S, \mathfrak{a}S^t).$$
Conversely, given $\psi: F^e_*S \rightarrow S$ for some $e > 0$, it suffices to show that an element $\gamma \in c(\mathfrak{a}S)^{\ceil{t(p^e-1)}} = c\mathfrak{a}^{\ceil{t(p^e-1)}}S$ of the form
$$\gamma = cas,$$
for $a \in \mathfrak{a}^{\ceil{t(p^e-1)}}$ and $s \in S$ satisfies
$$\psi(\gamma) \in \tau(R,\mathfrak{a}^t)S.$$
Since the relative Frobenius $F_{S/R}$ is surjective, $s$ can be written in the form
$$s = r_1s_1^{p^e} + \dots + r_ns_n^{p^e},$$
for $r_i \in R$ and $s_i \in S$. Thus,
$$\psi(\gamma) = \sum_{i=1}^n \psi(car_is_i^{p^e}) = \sum_{i = 1}^n s_i\psi(car_i),$$
where the second equality follows because $\psi$ is a $p^{-e}$-linear map of $S$-modules. Now choose $\phi \in \textrm{Hom}_R(F^e_*R,R)$ such that $\psi$ is obtained as an extension of $\phi$. Since $car_i \in R$, it follows that
$$\psi(\gamma) = \sum_{i = 1}^n s_i\psi(car_i) = \sum_{i=1}^n s_i\phi(car_i) \in \phi\big{(}F^e_*(c\mathfrak{a}^{\ceil{t(p^e-1)}})\big{)}S \subseteq \tau(R,\mathfrak{a}^t)S,$$
as desired.
\end{proof}

\begin{remarks}
{\*}
\begin{enumerate}
\item A different proof of Proposition \ref{test-ideals-essentially-etale} appears in \cite[Corollary 6.19]{AX16} via properties of the six-functor formalism under the more restrictive hypothesis that the base ring $R$ is Gorenstein.

\item Proposition \ref{test-ideals-essentially-etale} is stated for domains because we made the blanket assumption while developing the theory of test ideals that all rings are domains. But the proof of Proposition \ref{test-ideals-essentially-etale} works for reduced $F$-finite rings as well, primarily because completely stable test elements in the sense of Remark \ref{test-elements} exist for reduced $F$-finite rings \cite[Theorem 5.10]{HH94}.
\end{enumerate}
\end{remarks}

Proposition \ref{test-ideals-essentially-etale} has the following consequence for asymptotic test ideals:

\begin{corollary}
\label{asymptotic-test-ideals-etale-maps}
Let $R \xrightarrow{\varphi} S$ be an essentially \'etale map of $F$-finite, Noetherian domains. Suppose $\mathfrak{a}_{\bullet} = \{\mathfrak{a}_m\}_{m \in \Phi}$ is a graded family of nonzero ideals of $R$, and consider the family $\mathfrak{a}_{\bullet}S = \{\mathfrak{a}_mS\}_{m \in \Phi}$. 
\begin{enumerate}
\item For all $m \in \Phi$, $\tau_m(S, \mathfrak{a}_{\bullet}S) = \tau_m(R, \mathfrak{a}_{\bullet})S.$

\vspace{1mm}

\item If $\bigcap_{m \in \Phi}(\mathfrak{a}_m: \tau_m(R, \mathfrak{a}_{\bullet})) \neq (0)$, then $\bigcap_{m \in \Phi}(\mathfrak{a}_mS: \tau_m(S, \mathfrak{a}_{\bullet}S)) \neq (0)$.
\end{enumerate}
\end{corollary}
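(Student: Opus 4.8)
The plan is to deduce both statements from Proposition \ref{test-ideals-essentially-etale} together with the definition of asymptotic test ideals and elementary facts about flat base change for colon ideals. For part (1), the key observation is that $\tau_m(R,\mathfrak{a}_{\bullet}) = \tau(R,\mathfrak{a}_{\ell m}^{1/\ell})$ for all $\ell \gg 0$, and likewise $\tau_m(S,\mathfrak{a}_{\bullet}S) = \tau(S,(\mathfrak{a}_{\ell m}S)^{1/\ell})$ for all $\ell \gg 0$. So first I would fix $\ell$ large enough that both stabilizations hold simultaneously (possible since each is eventually constant, so any common large $\ell$ works). Then Proposition \ref{test-ideals-essentially-etale}, applied to the ideal $\mathfrak{a}_{\ell m}$ and the real number $t = 1/\ell$, gives $\tau(S,(\mathfrak{a}_{\ell m}S)^{1/\ell}) = \tau(R,\mathfrak{a}_{\ell m}^{1/\ell})S$. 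Here I should note that $R\to S$ is flat (formally étale maps of Noetherian rings are flat, as recalled in the text) and injective since $R$ is a domain, so $\mathfrak{a}_{\ell m}S$ is a non-zero ideal of $S$ and the right-hand side makes sense; also $S$ is regular essentially of finite type over an F-finite field, being essentially étale over such an $R$, so $\tau_m(S,\mathfrak{a}_{\bullet}S)$ is defined. Chaining the three equalities yields $\tau_m(S,\mathfrak{a}_{\bullet}S) = \tau_m(R,\mathfrak{a}_{\bullet})S$.

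For part (2), let $r \in R - \{0\}$ lie in the intersection $\bigcap_{m}(\mathfrak{a}_m : \tau_m(R,\mathfrak{a}_{\bullet}))$; I claim its image in $S$ (nonzero since $R\to S$ is injective) lies in $\bigcap_{m}(\mathfrak{a}_mS : \tau_m(S,\mathfrak{a}_{\bullet}S))$. Indeed, for each $m$ we have $r\cdot\tau_m(R,\mathfrak{a}_{\bullet}) \subseteq \mathfrak{a}_m$ in $R$; extending to $S$ and using part (1), $r\cdot\tau_m(S,\mathfrak{a}_{\bullet}S) = r\cdot\tau_m(R,\mathfrak{a}_{\bullet})S \subseteq \mathfrak{a}_mS$, which says exactly $r \in (\mathfrak{a}_mS : \tau_m(S,\mathfrak{a}_{\bullet}S))$. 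Since this holds for all $m \in \Phi$, the intersection over $S$ is non-zero.

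Neither step presents a serious obstacle; the only point requiring care is the uniform choice of $\ell$ in part (1) — one must check that the two families $\{\tau(R,\mathfrak{a}_{\ell m}^{1/\ell})\}_\ell$ and $\{\tau(S,(\mathfrak{a}_{\ell m}S)^{1/\ell})\}_\ell$ stabilize, which is guaranteed by the Noetherian hypothesis on $R$ and $S$ respectively together with the filtered-inclusion argument preceding Definition \ref{asymptotic-test-ideal-definition}. Given that, Proposition \ref{test-ideals-essentially-etale} applies verbatim at the stabilized level, and everything else is formal.
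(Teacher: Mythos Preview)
Your proposal is correct and essentially matches the paper's proof. The only cosmetic difference is in part (1): the paper uses the sum characterization $\tau_m = \sum_\ell \tau(\mathfrak{a}_{\ell m}^{1/\ell})$ and applies Proposition \ref{test-ideals-essentially-etale} term-by-term (then distributes extension over the sum), thereby avoiding any need to pick a common stabilizing $\ell$; your stabilization argument achieves the same end with a tiny extra bookkeeping step. Part (2) is identical to the paper's argument.
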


\begin{proof}
Again, by the injectivity of $\varphi$, $\mathfrak{a}_{\bullet}S$ is a graded family of nonzero ideals of $S$. Then 
\begin{align*}
\tau_m(S, \mathfrak{a}_{\bullet}S) \coloneqq \sum_{\ell \in \mathbb{N}} \tau\big{(}(\mathfrak{a}_{\ell m}S)^{1/\ell}\big{)} = \sum_{\ell \in \mathbb{N}} \tau(\mathfrak{a}^{1/\ell}_{\ell m})S 
= \bigg{(}\sum_{\ell \in \mathbb{N}} \tau(\mathfrak{a}^{1/\ell}_{\ell m})\bigg{)}S = \tau_m(R, \mathfrak{a}_{\bullet})S,
\end{align*}
where the second quality follows from Proposition \ref{test-ideals-essentially-etale}. This proves (1).

For (2), if $r$ is a nonzero element in $\bigcap_{m \in \Phi}(\mathfrak{a}_m: \tau_m(R, \mathfrak{a}_{\bullet}))$, then using (1), $\varphi(r)$ is a nonzero element in $\bigcap_{m \in \Phi}(\mathfrak{a}_mS: \tau_m(S, \mathfrak{a}_{\bullet}S))$. Note $\varphi(r)$ is nonzero by flatness of $\varphi$ and the fact that $R$ is a domain.
\end{proof}

\subsection{(Asymptotic) test ideals and birational maps}
\label{(Asymptotic) test ideals and birational maps}
We now examine the behavior of test ideals under birational ring maps. The main result (Proposition \ref{test-ideals-birational-maps}) is probably known to experts, but we include a proof, drawing inspiration from \cite{HY03, BS13, ST14}.

\begin{setup}
\label{setup}
Let $k$ be a perfect field of characteristic $p > 0$. Fix an extension $R \hookrightarrow S$ of finitely generated $k$-algebra regular domains such that $\text{Frac}(R) = \text{Frac}(S) = K$. Let $Y = \text{Spec}(S)$, $X = \text{Spec}(R)$, and
$$\pi: Y \rightarrow X$$
denote the birational morphism induced by the extension $R \subseteq S$. Choose canonical divisors $K_Y$ and $K_X$ that agree on the locus where $\pi$ is an isomorphism, and let $K_{Y/X} \coloneqq K_Y - \pi^*K_X$.
Define $\omega_{S/R} \coloneqq \Gamma(Y, \mathcal{O}_Y(K_{Y/X}))$. Then $\omega_{S/R}$ is a locally principal invertible fractional ideal of $S$, with inverse $\omega_{S/R}^{-1} = \Gamma(Y, \mathcal{O}_Y(-K_{Y/X}))$. 
\end{setup}

We use the following fact implicitly in the results of this subsection: \emph{In Setup \ref{setup}, if $\mathfrak{I}$ is a nonzero fractional ideal of $S$, then $R \cap \mathfrak{I}$ is a nonzero ideal of $R$.} 

This follows by clearing the denominator of a nonzero element of $\mathfrak{J}$.

\begin{proposition}
\label{test-ideals-birational-maps}
In Setup \ref{setup}, if $\mathfrak{a}$ is a nonzero ideal of $S$, and $\tilde{\mathfrak{a}}$ denotes its contraction $\mathfrak{a} \cap R$, then for any real $t > 0$,
$$\tau(R, \tilde{\mathfrak{a}}^t) \subseteq \big{(}\omega_{S/R}\cdot\tau(S, (\tilde{\mathfrak{a}}S)^t)\big{)} \cap R \subseteq \big{(}\omega_{S/R}\cdot\tau(S, \mathfrak{a}^t)\big{)} \cap R.$$
\end{proposition}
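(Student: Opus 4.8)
The second inclusion $\big{(}\omega_{S/R}\cdot\tau(S, (\tilde{\mathfrak{a}}S)^t)\big{)} \cap R \subseteq \big{(}\omega_{S/R}\cdot\tau(S, \mathfrak{a}^t)\big{)} \cap R$ is the easy half: since $\tilde{\mathfrak{a}} = \mathfrak{a} \cap R \subseteq \mathfrak{a}$, extending back to $S$ gives $\tilde{\mathfrak{a}}S \subseteq \mathfrak{a}$, so monotonicity of test ideals in the ideal argument (Theorem \ref{basic-properties-test-ideals}(1)) yields $\tau(S, (\tilde{\mathfrak{a}}S)^t) \subseteq \tau(S, \mathfrak{a}^t)$, and multiplying by the fractional ideal $\omega_{S/R}$ and intersecting with $R$ preserves the inclusion. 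So the real content is the first inclusion $\tau(R, \tilde{\mathfrak{a}}^t) \subseteq \big{(}\omega_{S/R}\cdot\tau(S, (\tilde{\mathfrak{a}}S)^t)\big{)} \cap R$, and I would henceforth write $\mathfrak{b} \coloneqq \tilde{\mathfrak{a}}$, working with the ideal $\mathfrak{b} \subseteq R$ and its extension $\mathfrak{b}S \subseteq S$; note $\tau(R,\mathfrak{b}^t) \subseteq R$ already, so it suffices to prove $\tau(R,\mathfrak{b}^t) \subseteq \omega_{S/R}\cdot\tau(S,(\mathfrak{b}S)^t)$ as fractional ideals of $S$.

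\textbf{Key steps for the first inclusion.} The strategy is to bound $\tau(R,\mathfrak{b}^t)$ by a single sum $\sum_\phi \phi(F^e_*(c\,\mathfrak{b}^{\ceil{t(p^e-1)}}))$ over $\phi \in \mathrm{Hom}_R(F^e_*R, R)$ for a fixed test element $c$ (Hara--Takagi, quoted in the remark after Definition \ref{test-ideal-pair-definition}), and then to transport each such $\phi$ to a $p^{-e}$-linear map on $S$. The transport mechanism is Proposition \ref{p-maps-divisors} / Remark \ref{important remark}: on the regular variety $X = \Spec(R)$, a map $\phi \in \mathrm{Hom}_R(F^e_*R,R)$ corresponds (up to a unit) to an effective divisor $D_\phi \sim (1-p^e)K_X$, and pulling back to $Y$ we get $\pi^*\phi \in \mathrm{Hom}_S(F^e_*\mathcal{O}_Y(\pi^*D_\phi), \mathcal{O}_Y)$; since $\pi^*D_\phi \sim (1-p^e)\pi^*K_X = (1-p^e)K_Y - (1-p^e)K_{Y/X}$, comparing with Proposition \ref{p-maps-divisors} identifies the induced $S$-linear map as landing not in $\mathcal{O}_Y$ but in $\mathcal{O}_Y((p^e-1)K_{Y/X}) = \omega_{S/R}^{-(p^e-1)}$-twisted object — more precisely, after clearing the twist we obtain a genuine $\psi \in \mathrm{Hom}_S(F^e_*S, S)$ together with the bookkeeping that $\phi(F^e_*(-))$ on $R$ factors, after multiplication by a suitable element of $\omega_{S/R}$, through $\psi(F^e_*(-))$ on $S$. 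The cleanest way to package this is the standard identity $\mathrm{Hom}_R(F^e_*R,R)\otimes_R S \hookrightarrow \mathrm{Hom}_S(F^e_*(\omega_{S/R}^{1-p^e}), S)$ coming from duality for the finite morphism $F^e$ and the fact that $(F^e)^!$ twists by $(1-p^e)K$; I would phrase it as: for each $\phi$ there is $\psi_\phi \in \mathrm{Hom}_S(F^e_*S,S)$ and the relation $\phi(F^e_*z) \in \omega_{S/R}\cdot \psi_\phi(F^e_*( (\omega_{S/R}^{-1})^{p^e-1}\text{-scaled } z))$ for $z \in R$. Feeding in $z \in c\,\mathfrak{b}^{\ceil{t(p^e-1)}}$ and noting $\mathfrak{b}^{\ceil{t(p^e-1)}} \subseteq (\mathfrak{b}S)^{\ceil{t(p^e-1)}}$, the right-hand side lands in $\omega_{S/R}\cdot \psi_\phi(F^e_*(c'\,(\mathfrak{b}S)^{\ceil{t(p^e-1)}}))$ for an appropriate $c' \in S - \{0\}$ (a power of a generator of $\omega_{S/R}^{-1}$ times $c$), and $c'$ can be arranged to be a test element of $S$ after possibly enlarging it. Summing over $e$ and $\phi$ and invoking the Hara--Takagi description of $\tau(S,(\mathfrak{b}S)^t)$ as $\sum_{e}\sum_{\psi}\psi(F^e_*(c'(\mathfrak{b}S)^{\ceil{t(p^e-1)}}))$ then gives $\tau(R,\mathfrak{b}^t) \subseteq \omega_{S/R}\cdot\tau(S,(\mathfrak{b}S)^t)$, as desired.

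\textbf{The main obstacle.} The delicate point is the precise bookkeeping of the canonical twist: making sure that the $\omega_{S/R}$ appearing is exactly the first power (not $\omega_{S/R}^{p^e-1}$ or some $e$-dependent power), and that the scaling of the coefficient ideal by powers of $\omega_{S/R}^{-1}$ is absorbed correctly into the choice of a \emph{fixed} test element $c'$ of $S$ that works for all $e$ simultaneously. This is exactly where the choice that $K_X$ and $K_Y$ agree on the isomorphism locus (Setup \ref{setup}) matters, and where one must be careful that $\omega_{S/R}$ being \emph{invertible} as a fractional ideal lets the twisting by $\mathcal{O}_Y(K_{Y/X})$ be realized multiplicatively inside $K$ rather than merely sheaf-theoretically. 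I expect the argument to go through by localizing at height-one primes of $S$ to reduce the divisor-theoretic comparison to a DVR computation (where $\omega_{S/R}$ is literally principal and the Frobenius-pushforward twist is transparent), then globalizing since $S$ is a Krull domain and both sides are determined by their localizations at height-one primes; alternatively one can cite the analogous computations in \cite{ST14, BS13} and adapt the coefficient-ideal versions from \cite{HY03}. The routine verifications — that $\mathfrak{b}S \neq 0$, that the relevant test elements exist, that $\ceil{t(p^e-1)}$ behaves well under the inclusion $\mathfrak{b}^n \subseteq (\mathfrak{b}S)^n$ — I would dispatch quickly using the facts already assembled in Theorem \ref{basic-properties-test-ideals} and the remark following Definition \ref{test-ideal-pair-definition}.
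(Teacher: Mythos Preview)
Your handling of the second inclusion and your identification of the key mechanism --- transporting $\phi \in \mathrm{Hom}_R(F^e_*R,R)$ to an $S$-linear map via Proposition~\ref{p-maps-divisors} --- match the paper exactly. The difference is in how this mechanism is deployed.

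You work with the Hara--Takagi \emph{generation} formula $\tau(R,\mathfrak{b}^t) = \sum_{e,\phi}\phi\big(F^e_*(c_0\,\mathfrak{b}^{\ceil{t(p^e-1)}})\big)$ and try to push each summand into $\omega_{S/R}\cdot\tau(S,(\mathfrak{b}S)^t)$. The paper instead uses the \emph{stability} characterization in Definition~\ref{test-ideal-pair-definition}: it sets $J \coloneqq \big(\omega_{S/R}\cdot\tau(S,(\mathfrak{b}S)^t)\big)\cap R$, notes $J \neq 0$, and verifies $\phi\big(F^e_*(J\,\mathfrak{b}^{\ceil{t(p^e-1)}})\big) \subseteq J$ for every $e$ and $\phi$, forcing $\tau(R,\mathfrak{b}^t) \subseteq J$ by minimality.

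This reorganization is exactly what dissolves your ``main obstacle.'' Working locally on $S$ with $\omega_{S/R} = c^{-1}S$, the paper shows that the $K$-linear extension $\phi_K$, restricted to $F^e_*(\omega_{S/R}^{1-p^e})$, lands in $S$ and coincides with the transported map; precomposing with multiplication by $F^e_*(c^{p^e-1})$ gives $\widetilde\phi \in \mathrm{Hom}_S(F^e_*S,S)$. The entire computation is then
\[
\phi_K\Big(F^e_*\big(\omega_{S/R}\cdot\tau(S,(\mathfrak{b}S)^t)\cdot\mathfrak{b}^{\ceil{t(p^e-1)}}\big)\Big)
= c^{-1}\,\widetilde\phi\Big(F^e_*\big(\tau(S,(\mathfrak{b}S)^t)\cdot\mathfrak{b}^{\ceil{t(p^e-1)}}\big)\Big)
\subseteq c^{-1}\tau(S,(\mathfrak{b}S)^t),
\]
the last step being the defining stability of $\tau(S,(\mathfrak{b}S)^t)$ under $\widetilde\phi$. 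Because the factor $\omega_{S/R}$ is already built into $J$, the $p^e$-linearity produces a \emph{single} $c^{-1}$ out front, no auxiliary test element on $S$ is ever chosen, and nothing depends on whether $K_{Y/X}$ is effective (which it need not be, since $\pi$ is not assumed proper).

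Your stated relation $\phi(F^e_*z) \in \omega_{S/R}\cdot\psi_\phi\big(F^e_*((\omega_{S/R}^{-1})^{p^e-1}\text{-scaled }z)\big)$ has the wrong exponent: the correct identity is $\phi(F^e_*z) = c^{-1}\,\widetilde\phi(F^e_*(cz))$, a single power of $c$, and this miscount is precisely what created your worry about $e$-dependence of $c'$. Your route can be completed (choose $c_0 \in R$ divisible enough that $cc_0 \in S$ on each chart of a finite cover trivializing $\omega_{S/R}$, then use that any nonzero element of the regular ring $S$ serves in the Hara--Takagi formula), but the paper's stability argument is shorter and makes the height-one/DVR detour unnecessary.
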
 

\begin{proof}
The inclusion $\big{(}\omega_{S/R}\cdot\tau((\tilde{\mathfrak{a}}S)^t)\big{)} \cap R \subseteq \big{(}\omega_{S/R}\cdot\tau(\mathfrak{a}^t)\big{)} \cap R$ is a consequence of $\tau((\tilde{\mathfrak{a}}S)^t) \subseteq \tau(\mathfrak{a}^t)$ (Theorem \ref{basic-properties-test-ideals}(1)). 

By definition, $\tau(R, \tilde{\mathfrak{a}}^t)$ is the smallest nonzero ideal $J$ of $R$ under inclusion such that for all $e \in \mathbb N$, $\phi \in \text{Hom}_R(F^e_*R, R)$,
$$\phi\big{(}F^e_*(J\tilde{\mathfrak{a}}^{\ceil{t(p^e-1)}})\big{)} \subseteq J.$$
Thus to finish the proof, it suffices to show the above containment for $J = (\omega_{S/R}\cdot\tau((\tilde{\mathfrak{a}}S)^t)) \cap R$. In fact, extending $\phi$ to a $K$-linear map 
$$\phi_K: F^e_*K \rightarrow K,$$
it is enough to show that
\begin{equation}
\label{crucial-containment-ii}
\phi_K\big{(}F^e_*{(}\omega_{S/R}\cdot\tau((\tilde{\mathfrak{a}}S)^t)\cdot\tilde{\mathfrak{a}}^{\ceil{t(p^e-1)}}{)}\big{)} \subseteq \omega_{S/R}\cdot\tau((\tilde{\mathfrak{a}}S)^t).
\end{equation}
Our strategy will be to obtain an $S$-linear map $F^e_*S \rightarrow S$ from $\phi_K$, and then use the defining property of $\tau((\tilde{\mathfrak{a}}S)^t)$ to prove (\ref{crucial-containment-ii}). 

By Proposition \ref{p-maps-divisors}, $\phi$ corresponds to a section $g \in \Gamma(X, \mathcal{O}_X((1-p^e)K_X))$, and then the pullback $\pi^*g$ is a global section of 
$\mathcal{O}_Y((1-p^e)\pi^*K_X) = \mathcal{O}_Y((1-p^e)(K_Y -K_{Y/X})).$
Using Proposition \ref{p-maps-divisors} again, $\pi^*g$ corresponds to a $p^{-e}$-linear map of $\mathcal{O}_Y$-modules $F^e_*\mathcal{O}_Y((1-p^e)K_{Y/X}) \rightarrow \mathcal{O}_Y,$
which on taking global sections gives an $S$-linear map
$$\varphi_g: F^e_*(\omega_{S/R}^{1- p^e}) \rightarrow S.$$
The map $\varphi_g$ can be constructed from $\phi$ in a natural way. For ease of notation,  let 
$$M \coloneqq F^e_*(\omega_{S/R}^{1-p^e}).$$
We claim that algebraically, $\varphi_g$ is obtained by restricting $\phi_K$ to the $S$-submodule $M$ of $F^e_*K$, but this needs justification because $\phi_K|_M$ is \emph{a priori} an $S$-linear map from $M \rightarrow K$, while $\varphi_g$ maps into S. However, choosing a nonzero $f \in R$ such that $R_f \hookrightarrow S_f$ is an isomorphism, we see that on localizing at $f$, the extensions $\varphi_g[f^{-1}]$ of $\varphi_g$ and $\phi_K|_M[f^{-1}]$ of $\phi_K|_M$ agree on the $S$-module $M_f = F^e_*(S_f) = F^e_*(R_f)$ with the map $\phi[f^{-1}]$. Thus, $\varphi_g$ and $\phi_K|_M$ coincide on $M = F^e_*(\omega_{S/R}^{1-p^e})$, and so $\phi_K|_M$ maps into $S$ because $\varphi_g$ does.

Since the inclusion $\tau(R, \tilde{\mathfrak{a}}^t) \subseteq \omega_{S/R}\cdot\tau((\tilde{\mathfrak{a}}S)^t)$ can be checked locally on $S$, one may assume that $\omega^{-1}_{S/R}$ is principal, say 
$\omega_{S/R}^{-1} = cS.$
Then left-mutiplication by $F^e_*(c^{p^e-1})$ induces an $S$-linear map $F^e_*S \rightarrow M$, yielding on composition an element
$$\widetilde{\phi} \coloneqq F^e_*S \xrightarrow{F^e_*(c^{p^e-1}) \cdot} M \xrightarrow{\phi_K|_M} S$$
of $\text{Hom}_S(F^e_*S, S)$. Using $\omega_{S/R} = c^{-1}S$, we finally get
\begin{align*}
\phi_K\big{(}F^e_*(\omega_{S/R}\cdot\tau((\tilde{\mathfrak{a}}S)^t)\cdot\tilde{\mathfrak{a}}^{\ceil{t(p^e-1)}})\big{)} &= c^{-1}\cdot \phi_K\big{(}F^e_*(c^{p^e-1}\tau((\tilde{\mathfrak{a}}S)^t)\cdot\tilde{\mathfrak{a}}^{\ceil{t(p^e-1)}})\big{)} =\\
 c^{-1}\cdot \widetilde{\phi}\big{(}F^e_*(\tau((\tilde{\mathfrak{a}}S)^t) \cdot\tilde{\mathfrak{a}}^{\ceil{t(p^e-1)}})\big{)} & \subseteq c^{-1}\tau((\tilde{\mathfrak{a}}S)^t) = \omega_{S/R} \cdot \tau((\tilde{\mathfrak{a}}S)^t),
\end{align*}
where the inclusion follows from the definition of $\tau((\tilde{\mathfrak{a}}S)^t)$, and the fact that $\widetilde{\phi}\in \text{Hom}_S(F^e_*S, S)$.
\end{proof}

\begin{corollary}
\label{asymptotic-test-ideals-birational-maps}
Suppose in Setup \ref{setup}, we are given a graded family $\mathfrak{a}_{\bullet} = \{\mathfrak{a}_m\}_{m \in \Phi}$ of nonzero ideals of $S$. Denote by $\tilde{\mathfrak{a}}_{\bullet}$ the family $\{\mathfrak{a}_m \cap R\}_{m \in \Phi}$. Then
\begin{enumerate}
\item For all $m \in \Phi$, $\tau_m(R, \tilde{\mathfrak{a}}_{\bullet}) \subseteq (\omega_{S/R}\cdot \tau_m(S, \mathfrak{a}_{\bullet})) \cap R$.
\item If $\bigcap_{m \in \Phi} (\mathfrak{a}_m: \tau_m(S, \mathfrak{a}_{\bullet})) \neq (0)$, then $\bigcap_{m \in \Phi} (\mathfrak{a}_m \cap R: \tau_m(R, \tilde{\mathfrak{a}}_{\bullet})) \neq (0)$.
\end{enumerate}
\end{corollary}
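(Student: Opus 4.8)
\textbf{Proof plan for Corollary \ref{asymptotic-test-ideals-birational-maps}.}
The strategy is to deduce both parts directly from Proposition \ref{test-ideals-birational-maps}, applied level by level in the graded family, together with the definition of the asymptotic test ideal as a large-$\ell$ stabilization of $\tau(\mathfrak{a}_{\ell m}^{1/\ell})$. First I would verify that $\tilde{\mathfrak{a}}_{\bullet}$ really is a graded family of non-zero ideals of $R$: it is graded because $(\mathfrak{a}_s\cap R)(\mathfrak{a}_t\cap R)\subseteq \mathfrak{a}_s\mathfrak{a}_t\cap R\subseteq\mathfrak{a}_{s+t}\cap R$, and each $\mathfrak{a}_m\cap R$ is non-zero by the remark following Setup \ref{setup} (clear denominators of a non-zero element of $\mathfrak{a}_m$). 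So $\tau_m(R,\tilde{\mathfrak{a}}_{\bullet})$ makes sense.

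For part (1), fix $m\in\Phi$. Choose $\ell\gg 0$ large enough that simultaneously $\tau_m(R,\tilde{\mathfrak{a}}_{\bullet}) = \tau(R,(\widetilde{\mathfrak{a}_{\ell m}})^{1/\ell})$ and $\tau_m(S,\mathfrak{a}_{\bullet}) = \tau(S,\mathfrak{a}_{\ell m}^{1/\ell})$ (possible since each family stabilizes for large index, and any common multiple of two stabilizing values still stabilizes). Now apply Proposition \ref{test-ideals-birational-maps} with the ideal $\mathfrak{a} = \mathfrak{a}_{\ell m}$ of $S$, so $\tilde{\mathfrak{a}} = \mathfrak{a}_{\ell m}\cap R = \widetilde{\mathfrak{a}_{\ell m}}$, and with the real number $t = 1/\ell > 0$. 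The proposition gives
$$\tau(R,(\widetilde{\mathfrak{a}_{\ell m}})^{1/\ell}) \subseteq \big{(}\omega_{S/R}\cdot\tau(S,\mathfrak{a}_{\ell m}^{1/\ell})\big{)}\cap R.$$
Rewriting the two outer test ideals via the stabilization just chosen, this is exactly $\tau_m(R,\tilde{\mathfrak{a}}_{\bullet}) \subseteq (\omega_{S/R}\cdot\tau_m(S,\mathfrak{a}_{\bullet}))\cap R$, which is (1). (One should note $\omega_{S/R}$ does not depend on $m$ or $\ell$, so no compatibility issue arises.)

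For part (2), suppose $s\in\bigcap_{m\in\Phi}(\mathfrak{a}_m:\tau_m(S,\mathfrak{a}_{\bullet}))$ is non-zero, i.e. $s\cdot\tau_m(S,\mathfrak{a}_{\bullet})\subseteq\mathfrak{a}_m$ for all $m$. Since $\omega_{S/R}$ is a non-zero fractional ideal of $S$, pick any non-zero $w\in\omega_{S/R}^{-1}$; then $w\omega_{S/R}\subseteq S$. I claim $r\coloneqq sw$, cleared of denominators into $R$, works. Precisely, using (1), for each $m$ we have $\tau_m(R,\tilde{\mathfrak{a}}_{\bullet})\subseteq\omega_{S/R}\cdot\tau_m(S,\mathfrak{a}_{\bullet})$, hence
$$sw\cdot\tau_m(R,\tilde{\mathfrak{a}}_{\bullet}) \subseteq sw\cdot\omega_{S/R}\cdot\tau_m(S,\mathfrak{a}_{\bullet}) \subseteq s\cdot S\cdot\tau_m(S,\mathfrak{a}_{\bullet}) \subseteq S\cdot\mathfrak{a}_m = \mathfrak{a}_m,$$
using $w\omega_{S/R}\subseteq S$ and $S$-linearity. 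Thus $sw\cdot\tau_m(R,\tilde{\mathfrak{a}}_{\bullet})\subseteq\mathfrak{a}_m\cap R$ provided $sw\cdot\tau_m(R,\tilde{\mathfrak{a}}_{\bullet})\subseteq R$; since $\tau_m(R,\tilde{\mathfrak{a}}_{\bullet})\subseteq R$ already, this holds as soon as $sw\in R$, which I arrange by replacing $sw$ by $f\cdot sw$ for a suitable non-zero $f\in R$ clearing the denominator (the ideal on the left only shrinks, and $f s w\neq 0$ since $R$ is a domain). The resulting non-zero element lies in $\bigcap_{m\in\Phi}(\mathfrak{a}_m\cap R:\tau_m(R,\tilde{\mathfrak{a}}_{\bullet}))$, giving (2).

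\textbf{Main obstacle.} The only genuinely delicate point is the uniform choice of the stabilizing exponent $\ell$ in part (1): one must be sure that a single $\ell$ can be taken to stabilize both asymptotic test ideals for a given $m$, and that the inclusion from Proposition \ref{test-ideals-birational-maps} is applied with matching exponent $t=1/\ell$ on both sides. This is handled by the filtered/Noetherian stabilization already recorded in Definition \ref{asymptotic-test-ideal-definition}, so it is routine; everything else is bookkeeping with the fractional ideal $\omega_{S/R}$ and clearing denominators into $R$.
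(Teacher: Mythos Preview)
Your proof is correct and follows essentially the same approach as the paper. For (1) the paper likewise chooses a single $\ell\gg 0$ stabilizing both asymptotic test ideals and applies Proposition~\ref{test-ideals-birational-maps} with $t=1/\ell$; for (2) the paper works with the entire ideal $\mathfrak{J}\cdot\omega_{S/R}^{-1}\cap R$ rather than a single element $fsw$, but the computation is the same chain of inclusions using (1) and $\omega_{S/R}^{-1}\cdot\omega_{S/R}\subseteq S$.
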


\begin{proof}
Clearly $\tilde{\mathfrak{a}}_{\bullet}$ is a graded family of nonzero ideals of $R$. Now (1) follows from Proposition \ref{test-ideals-birational-maps} by choosing $\ell \gg 0$ such that $\tau_m(\tilde{\mathfrak{a}}_{\bullet}) = \tau((\mathfrak{a}_{\ell m} \cap R)^{1/\ell})$, and $\tau_m(\mathfrak{a}_{\bullet}) = \tau(S, \mathfrak{a}_{\ell m}^{1/\ell})$.

For (2), let $\mathfrak{J}$ denote the nonzero ideal $\bigcap_{m \in \Phi} (\mathfrak{a}_m: \tau_m(\mathfrak{a}_{\bullet}))$ of $S$. Note $\mathfrak{J} \cdot \omega_{S/R}^{-1} \cap R$ is a nonzero ideal of $R$ because $\mathfrak{J} \cdot \omega_{S/R}^{-1}$ is a nonzero fractional ideal of $S$, and $R$ and $S$ have the same fraction field. Then for all $m \in \Phi$,
\begin{align*}
(\mathfrak{J} \cdot \omega_{S/R}^{-1} \cap R) \cdot \tau_m(\tilde{\mathfrak{a}}_{\bullet}) \subseteq {(}\mathfrak{J} \cdot \omega_{S/R}^{-1} \cap R)\big{(}(\omega_{S/R}\cdot \tau_m(\mathfrak{a}_{\bullet})) \cap R\big{)}\\
\subseteq \big{(}\mathfrak{J}\cdot \omega_{S/R}^{-1}\cdot \omega_{S/R} \cdot \tau_m(\mathfrak{a}_{\bullet})\big{)} \cap R = (\mathfrak{J}\cdot\tau_m(\mathfrak{a}_{\bullet}))\cap R \subseteq \mathfrak{a}_m \cap R.
\end{align*} 
Thus, $(0) \neq \mathfrak{J} \cdot \omega_{S/R}^{-1} \cap R \subseteq \bigcap_{m \in \Phi} (\mathfrak{a}_m \cap R: \tau_m(\tilde{\mathfrak{a}}_{\bullet}))$.
\end{proof}

\section{Proof of Theorem \ref{Theorem-B}}
\label{section-proof-theorem-B}

For a ring $A$ of $K/k$ admitting a center of $\nu$, we will say $A$ \textbf{satisfies Theorem \ref{Theorem-B} for $\nu$} if $\bigcap_{m \in \mathbb{R}_{\geq 0}} (\mathfrak{a}_m: \tau_m(\mathfrak{a}_{\bullet})) \neq (0)$, where $\mathfrak{a}_m$ are the valuation ideals of $A$ associated to $\nu$.

To prove Theorem \ref{Theorem-B} we need the following general fact about primary ideals in a Noetherian domain, which in particular implies that if Theorem \ref{Theorem-B} holds for the local ring at the center $x$ of a variety $X$ of $K/k$, then it also holds on any affine open neighborhood of $x$.

\begin{lemma}
\label{primary-ideals-Noetherian domain}
Let $A$ be a Noetherian domain, and $\mathfrak p$ a prime ideal of $A$.
\begin{enumerate}
\item For any $\mathfrak p$-primary ideal $\mathfrak a$ of $A$, $\mathfrak{a} A_{\mathfrak p} \cap A = \mathfrak a$.
\item Let $\{\mathfrak{a}_i\}_{i \in I}, \{J_i\}_{i \in I}$ be collections ideals of $A$ such that each $\mathfrak{a}_i$ is $\mathfrak p$-primary. Then 
$$\bigcap_{i \in I} (\mathfrak{a}_iA_{\mathfrak{p}}: J_iA_{\mathfrak{p}}) =\bigg(\bigcap_{i \in I} (\mathfrak{a}_i: J_i)\bigg{)}A_{\mathfrak p}.$$
Thus, $\bigcap_{i \in I} (\mathfrak{a}_iA_{\mathfrak{p}}: J_iA_{\mathfrak{p}}) \neq (0)$ if and only if $\bigcap_{i \in I} (\mathfrak{a}_i: J_i) \neq (0)$.
\end{enumerate}
\end{lemma}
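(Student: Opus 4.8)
The plan is to establish (1) directly from the definition of a primary ideal, and then deduce (2) from (1) together with the standard fact that every ideal of a localization is the extension of its own contraction. For (1), the inclusion $\mathfrak a \subseteq \mathfrak a A_{\mathfrak p} \cap A$ is obvious. For the reverse inclusion, take $x \in \mathfrak a A_{\mathfrak p} \cap A$; writing $x/1 \in \mathfrak a A_{\mathfrak p}$ as a finite sum and clearing denominators produces $a \in \mathfrak a$ and $s \in A - \mathfrak p$ with $x/1 = a/s$ in $A_{\mathfrak p}$, hence $u(sx-a)=0$ in $A$ for some $u \in A - \mathfrak p$, i.e.\ $(us)\,x = ua \in \mathfrak a$. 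Since $us \notin \mathfrak p = \sqrt{\mathfrak a}$ and $\mathfrak a$ is primary, this forces $x \in \mathfrak a$.

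For (2), set $\mathfrak c \coloneqq \bigcap_{i \in I}(\mathfrak a_i : J_i) \subseteq A$ and $\mathfrak d \coloneqq \bigcap_{i \in I}(\mathfrak a_i A_{\mathfrak p} : J_i A_{\mathfrak p}) \subseteq A_{\mathfrak p}$; the goal is the identity $\mathfrak d = \mathfrak c A_{\mathfrak p}$, after which the last assertion is immediate. The inclusion $\mathfrak c A_{\mathfrak p} \subseteq \mathfrak d$ is clear, since $y \in \mathfrak c$ means $yJ_i \subseteq \mathfrak a_i$ for all $i$, a relation preserved by extension of scalars to $A_{\mathfrak p}$. For $\mathfrak d \subseteq \mathfrak c A_{\mathfrak p}$, I would use that the $A_{\mathfrak p}$-ideal $\mathfrak d$ equals $(\mathfrak d \cap A)A_{\mathfrak p}$, so it is enough to show $\mathfrak d \cap A \subseteq \mathfrak c$. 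If $x \in \mathfrak d \cap A$, then for every $i$ and every $j \in J_i$ we have $xj/1 \in \mathfrak a_i A_{\mathfrak p}$; since $xj \in A$, part (1) yields $xj \in \mathfrak a_i A_{\mathfrak p} \cap A = \mathfrak a_i$. Thus $xJ_i \subseteq \mathfrak a_i$ for all $i$, i.e.\ $x \in \mathfrak c$.

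The closing "if and only if" then follows because $A$ is a domain: any nonzero ideal of $A$ has nonzero extension to $A_{\mathfrak p}$, so $\mathfrak c A_{\mathfrak p} = (0)$ exactly when $\mathfrak c = (0)$, and we have just shown $\mathfrak d = \mathfrak c A_{\mathfrak p}$. The argument is largely routine; the only point that genuinely needs care is that localization does not commute with infinite intersections, so one cannot descend from $\mathfrak d$ to $A$ by naively pulling the intersection through the localization — this is precisely where the $\mathfrak p$-primariness of the $\mathfrak a_i$ (through part (1)) and the fact that ideals of a localization are extended get used. I do not anticipate any real obstacle beyond keeping this bookkeeping straight.
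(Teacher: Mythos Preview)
Your proof is correct and follows essentially the same approach as the paper: part (1) is handled directly from the definition of primary, and part (2) reduces the hard inclusion to showing that elements of $\mathfrak d$ lying in $A$ belong to $\mathfrak c$, using (1) to conclude $xj \in \mathfrak a_i A_{\mathfrak p} \cap A = \mathfrak a_i$. The only cosmetic difference is that the paper clears denominators by hand (multiplying an arbitrary $\tilde s \in \mathfrak d$ by some $t \in A - \mathfrak p$) whereas you invoke the equivalent fact $\mathfrak d = (\mathfrak d \cap A)A_{\mathfrak p}$; the logical content is the same.
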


\begin{proof} [\textbf{Proof of Lemma \ref{primary-ideals-Noetherian domain}}]
(1) follows easily from the definition of a primary ideal. For (2), the containment $\big(\bigcap_{i \in I} (\mathfrak{a}_i: J_i)\big{)}A_{\mathfrak p} \subseteq \bigcap_{i \in I} (\mathfrak{a}_iA_{\mathfrak{p}}: J_iA_{\mathfrak{p}})$ is easy to verify. Now let 
$$\tilde{s} \in \bigcap_{i \in I} (\mathfrak{a}_iA_{\mathfrak{p}}: J_iA_{\mathfrak{p}}),$$ 
and choose $t \in A - \mathfrak p$ such that $t\tilde{s} \in A$, noting that $t\tilde{s}$ is also in the ideal $\bigcap_{i \in I} (\mathfrak{a}_iA_{\mathfrak{p}}: J_iA_{\mathfrak{p}})$. Then for all $i \in I$,
$$(t\tilde{s}) \cdot J_i \subseteq (t\tilde{s})\cdot(J_iA_{\mathfrak p} \cap A) \subseteq \mathfrak{a}_iA_{\mathfrak p} \cap A = \mathfrak{a}_i,$$
where the last equality comes from (1). Thus, $t\tilde{s} \in \bigcap_{i \in I} (\mathfrak{a}_i: J_i)$, and so $\tilde{s} \in \big(\bigcap_{i \in I} (\mathfrak{a}_i: J_i)\big{)}A_{\mathfrak p}$, establishing the other inclusion. Since $A \rightarrow A_{\mathfrak p}$ is injective, the final statement is clear.
\end{proof}

Using Lemma \ref{primary-ideals-Noetherian domain}, Theorem \ref{Theorem-B} is proved as follows:

\begin{proof}[\textbf{Proof of Theorem \ref{Theorem-B}}]
Let $(A, \mathfrak m, \kappa_A)$ be the regular local ring $\nu$ is centered on, where $A$ is essentially of finite type over the perfect field $k$ with fraction field $K$. Suppose $\text{dim}_{\mathbb{Q}} (\mathbb{Q} \otimes_{\mathbb Z} \Gamma_\nu) = d$ and $\td {K/k} = n$. Let $R$ be a finitely generated, regular $k$-subalgebra of $K$ with a prime ideal $\mathfrak{p}$ such that $A = R_{\mathfrak{p}}$. Using local monomialization (Proposition \ref{corollary-theorem-on-local-uniformization}), choose a finitely generated, regular $k$-subalgebra $S$ of $K$ along with an inclusion $R \hookrightarrow S$ such that $\nu$ is centered on the prime $\mathfrak{q}$ of $S$, and $S_{\mathfrak{q}}$ has Krull dimension $d$, with a regular system of parameters $\{x_1, \dots, x_d\}$ such that $\nu(x_1), \dots, \nu(x_d)$ freely generate the value group $\Gamma_\nu$. Note that if $\{\mathfrak{b}_m\}_{m \in \mathbb{R}_{\geq 0}}$ is the set of valuation ideals of $S$, then $\{\mathfrak{b}_m \cap R\}_{m \in \mathbb{R}_{\geq 0}}$ is the set of valuation ideals of $R$. Now if $S_{\mathfrak{q}}$ satisfies Theorem \ref{Theorem-B}, then so does $S$ (Lemma \ref{valuation-ideals-primary} and Lemma \ref{primary-ideals-Noetherian domain}), hence $R$ (Corollary \ref{asymptotic-test-ideals-birational-maps}), hence also $R_{\mathfrak p} = A$ because $\mathfrak{p}$ is the center of $\nu$ on $R$ (using Lemma \ref{primary-ideals-Noetherian domain} again). Thus, it suffices to prove Theorem \ref{Theorem-B} for $A = S_{\mathfrak q}$.

The valuation ideals $\mathfrak{a}_{\bullet} = \{\mathfrak{a}_m\}_{m \in \mathbb{R}_{\geq 0}}$ of $A$ are then monomial in the regular system of parameters $x_1, \dots, x_d$ (Proposition \ref{corollary-theorem-on-local-uniformization}). As $A$ has dimension $d$, its residue field $\kappa_A$ has transcendence degree $n - d$ over $k$. Then using the fact that $k$ is perfect, choose a separating transcendence basis $\{t_1, \dots, t_{n-d}\}$ of $\kappa_A/ k$, and pick $y_1, \dots, y_{n - d} \in A$ such that
$$y_i \equiv t_i \hspace{2mm} \text{mod} \hspace{1mm} \mathfrak m.$$
By \cite[VI, $\mathsection$10.3, Theorem 1]{Bou89}, $\{x_1, \dots, x_d, y_1, \dots, y_{n-d}\}$ is algebraically independent over $k$, and we obtain a local extension
$$j: k[x_1, \dots, x_d, y_1, \dots, y_{n-d}]_{(x_1, \dots, x_d)} \hookrightarrow A,$$
of local rings of the same dimension that is unramified by construction. Moreover, $j$ is also flat \cite[Theorem 23.1]{Mat89}, essentially of finite type, hence essentially \'etale. 

Let $\widetilde{A} \coloneqq k[x_1, \dots, x_d, y_1, \dots, y_{n-d}]_{(x_1, \dots, x_d)}$. It is easy to see that $\mathfrak{a}_{\bullet} \cap \widetilde{A} \coloneqq \{\mathfrak{a}_m \cap \widetilde{A}\}_{m \in \mathbb{R}_{\geq 0}}$ is the collection of valuation ideals of $\widetilde{A}$ with respect to the restriction of $\nu$ to $\text{Frac}(\widetilde{A})$. Moreover, if $S$ is a set of monomials in $x_1, \dots, x_d$ generating $\mathfrak{a}_m$, and $I_m$ is the ideal of $\widetilde{A}$ generated by $S$, then $I_m = I_mA \cap \widetilde{A} = \mathfrak{a}_m \cap \widetilde{A}$, where the first equality follows by faithful flatness of $j$. Thus, each $\mathfrak{a}_m \cap \widetilde{A}$ is generated by the same monomials in $x_1, \dots, x_d$ that generate $\mathfrak{a}_m$. Then to prove the theorem, it suffices to show by Corollary \ref{asymptotic-test-ideals-etale-maps} that
$$
\bigcap_{m \in \mathbb{R}_{\geq 0}} (\mathfrak{a}_m \cap \widetilde{A}: \tau_m(\mathfrak{a}_{\bullet} \cap \widetilde{A})) \neq (0).
$$

But now we are in the setting of Example \ref{test-ideals-monomials} since we are dealing with monomial ideals in the localization of a polynomial ring. We claim that 
$$ x_1\dots x_d \in \bigcap_{m \in \mathbb{R}_{\geq 0}} (\mathfrak{a}_m \cap \widetilde{A}: \tau_m(\mathfrak{a}_{\bullet} \cap \widetilde{A})).$$ 
Choose $\ell \in \mathbb{N}$ such that $\tau_m(\mathfrak{a}_{\bullet} \cap \widetilde{A}) = \tau((\mathfrak{a}_{\ell m} \cap \widetilde{A})^{1/\ell})$. Since $\mathfrak{a}_{\ell m} \cap \widetilde{A}$ is generated by 
$\big{\{}x^{a_1}_1\dots x^{a_d}_{d}: \sum a_i\nu(x_i) \geq \ell m\big{\}},$
and test ideals commute with localization, we then know by Example \ref{test-ideals-monomials} that $\tau_m(\mathfrak{a}_{\bullet} \cap \widetilde{A}) = \tau((\mathfrak{a}_{\ell m} \cap \widetilde{A})^{1/\ell})$ is generated by monomials $x^{b_1}_1 \dots x^{b_d}_d$ such that $(b_1 + 1, \dots, b_d + 1)$ is in the interior convex hull of
$$\bigg{\{}\bigg{(}\frac{a_1}{\ell}, \dots, \frac{a_d}{\ell} \bigg{)}: a_i \in \mathbb{N} \cup \{0\}, \sum \frac{a_i}{\ell}\nu(x_i) \geq m\bigg{\}}.$$
Then clearly $\sum (b_i + 1)\nu(x_i) \geq m$, that is, $(x_1\dots x_n) \cdot x^{b_1}_1\dots x^{b_d}_d \in \mathfrak{a}_m \cap \widetilde{A}$. This shows $(x_1\dots x_n) \cdot \tau_m(\mathfrak{a}_{\bullet} \cap \widetilde{A}) \subseteq \mathfrak{a}_m \cap \widetilde{A}$, for all $m \in \mathbb{R}_{\geq 0}$.
\end{proof}

\section{Consequences of Theorem \ref{Theorem-B}}
\label{Consequences of Theorem B}
Throughout this section $k$ is a perfect field of prime characteristic, $X$ a regular variety over $k$ with function field $K$, and $\nu$ a non-trivial, real-valued Abhyankar valuation of $K/k$ centered on $x \in X$.

\subsection{Proof of Theorem \ref{Theorem-A}} 
Our goal is to show that there exists $e \geq 0$ such that for all $m \in \mathbb R_{\geq 0}$, $\ell \in \mathbb N$,
$$\mathfrak{a}_m(X)^{\ell} \subseteq \mathfrak{a}_{\ell m}(X) \subseteq \mathfrak{a}_{m-e}(X)^{\ell}.$$
From now we also assume $m > 0$, as otherwise all the ideals equal $\mathcal{O}_X$. 

Let  $(\mathfrak{a}_{\bullet})_x \coloneqq \{\mathfrak{a}_m(\mathcal{O}_{X,x})\}_{m \in \mathbb{R}_{\geq 0}}$ denote the graded system of valuation ideals of the center $\mathcal{O}_{X,x}$, and using Theorem \ref{Theorem-B}, fix a nonzero $\tilde{s} \in \mathcal{O}_{X,x}$ such that
$$\tilde{s} \in \bigcap_{m \in \mathbb{R}_{\geq 0}} \big{(}\mathfrak{a}_m(\mathcal{O}_{X,x}): \tau_m((\mathfrak{a}_{\bullet})_x)\big{)}.$$
Define $e:= \nu(\tilde{s}).$

Since the inclusion $\mathfrak{a}_m^{\ell} \subseteq \mathfrak{a}_{\ell m}$ is clear, it suffices to show that for the above choice of $e$,
\begin{equation}
\label{star}
\Gamma(U, \mathfrak{a}_{\ell m}) \subseteq \Gamma(U, \mathfrak{a}_{m-e}^{\ell}),
\end{equation}
for all $m \in \mathbb R_{\geq 0}$, $\ell \in \mathbb N$, and affine open $U \subseteq X$. Furthermore, we may assume $U$ contains the center $x$ of $\nu$, as otherwise $\Gamma(U, \mathfrak{a}_{\ell m})$ and $\Gamma(U, \mathfrak{a}_{m-e}^{\ell})$ both equal $\mathcal{O}_X(U)$. We use $(\mathfrak{a}_{\bullet})_U$ to denote the collection $\{\mathfrak{a}_m(U)\}_{m \in \mathbb R_{\geq 0}}$ of valuation ideals of $\mathcal{O}_X(U)$.

Utilizing Lemma \ref{valuation-ideals-primary} and Lemma \ref{primary-ideals-Noetherian domain}(2), express $\tilde{s}$ as a fraction $s_U/t$, for some nonzero 
$$s_U \in \bigcap_{m \in \mathbb{R}_{\geq 0}} \big{(}\mathfrak{a}_m(U): \tau_m((\mathfrak{a}_{\bullet})_U)\big{)},$$ and $t \in \mathcal{O}_X(U)$ such that $t_x \in \mathcal{O}_{X,x}^{\times}$. Then $\nu(s_U) = \nu(\tilde{s}) = e$, and it follows that for all $m \in \mathbb{R}_{\geq 0}$,
$$\tau_m\big{(}(\mathfrak{a}_{\bullet})_U\big{)} \subseteq \mathfrak{a}_{m-e}(U).$$
Proposition \ref{some-properties-asymptotic-test-ideal}(2) implies that $\Gamma(U, \mathfrak{a}_{\ell m}) \subseteq \tau_m\big{(}(\mathfrak{a}_{\bullet})_U\big{)}^{\ell},$
and we obtain (\ref{star}) by observing that 
\[
\Gamma(U, \mathfrak{a}_{\ell m})  \subseteq \tau_m\big{(}(\mathfrak{a}_{\bullet})_U\big{)}^{\ell} \subseteq \mathfrak{a}_{m-e}(U)^{\ell} =  \Gamma(U, \mathfrak{a}_{m-e}^{\ell}). \qedhere
\]

\subsection{Proof of Corollary \ref{Corollary-C}}
We want to prove that if $\nu$, $w$ are two non-trivial real-valued Abhyankar valuations of $K/k$, centered on a regular local ring $(A, \mathfrak m)$ essentially of finite type over $k$ with fraction field $K$, then there exists $C > 0$ such that for all $x \in A$,
$$\nu(x) \leq Cw(x).$$
Our argument is similar to \cite{ELS03}, and is provided for completeness.

We let $\mathfrak{a}_{\bullet} = \{\mathfrak{a}_m\}_{m \in \mathbb{R}_{\geq 0}}$ denote the collection of valuation ideals of $A$ associated to $\nu$, and $\mathfrak{b}_{\bullet} = \{\mathfrak{b}_m\}_{m \in \mathbb{R}_{\geq 0}}$ the collection associated to $w$. Since $A$ is Noetherian, there exists a nonzero $x \in \mathfrak m$ such that for all nonzero $y$ in $\mathfrak m$,
$$w(x) \leq w(y).$$
Otherwise, one can find a sequence $(x_n)_{n \in \mathbb N} \subset \mathfrak m$ such that $w(x_1) > w(x_2) > w(x_3) > \dots,$ giving us a strictly ascending chain of ideals
$\mathfrak{b}_{w(x_1)} \subsetneq \mathfrak{b}_{w(x_2)} \subsetneq \mathfrak{b}_{w(x_3)} \subsetneq \dots \hspace{1mm}.$
For the rest of the proof, let
$$\delta \coloneqq \inf\{\nu(x): x \in \mathfrak{m} - \{0\}\}.$$

\begin{claim} 
\label{claim}
There exists $n > 0$ such that for all $\ell \in \mathbb{N}$, $\mathfrak{a}_{\ell n} \subseteq \mathfrak{b}_{\ell\delta}$. 
\end{claim}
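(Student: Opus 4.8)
The plan is to compare the two valuation filtrations by routing through the $\mathfrak m$-adic filtration. First I would record that $\delta>0$: by the Noetherian argument already used for $w$, if $\delta=0$ one finds $y_n\in\mathfrak m\setminus\{0\}$ with $v(y_1)>v(y_2)>\cdots>0$, hence an infinite strictly increasing chain $\mathfrak a_{v(y_1)}\subsetneq\mathfrak a_{v(y_2)}\subsetneq\cdots$, which is impossible. Set $\beta:=w(x)>0$, the minimum of $w$ on $\mathfrak m\setminus\{0\}$. Every nonzero element of $\mathfrak m^{k}$ is a finite $A$-linear combination of products of $k$ elements of $\mathfrak m$, so it has $w$-value $\ge k\beta$; thus $\mathfrak m^{k}\subseteq\mathfrak b_{k\beta}$ for all $k\in\mathbb N$. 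Putting $q:=\lceil\delta/\beta\rceil\in\mathbb N$ and using that $\mathfrak b_{\bullet}$ is order-reversing,
\[
\mathfrak m^{\ell q}\subseteq\mathfrak b_{\ell q\beta}\subseteq\mathfrak b_{\ell\delta}\qquad\text{for all }\ell\in\mathbb N ,
\]
so it is enough to find $p>0$ with $\mathfrak a_{\ell p}\subseteq\mathfrak m^{\ell q}$ for all $\ell$.

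The substantive ingredient is the uniform inclusion $\mathfrak a_{\ell m}\subseteq\mathfrak a_{m-e_v}^{\ell}$, valid for all $m\in\mathbb R_{\ge 0}$ and $\ell\in\mathbb N$ with a single constant $e_v\ge 0$. This is Theorem \ref{Theorem-A} applied to $v$ (after viewing $A$ as the local ring of a point of a regular variety of $K/k$), or, to stay local, it follows directly from Theorem \ref{Theorem-B} and Proposition \ref{some-properties-asymptotic-test-ideal}(2) exactly as in the proof of Theorem \ref{Theorem-A}: picking $r\in A\setminus\{0\}$ with $r\,\tau_m(A,\mathfrak a_{\bullet})\subseteq\mathfrak a_m$ for all $m$ and setting $e_v:=v(r)\ge 0$, one gets $\tau_m(A,\mathfrak a_{\bullet})\subseteq\mathfrak a_{m-e_v}$ and hence $\mathfrak a_{\ell m}\subseteq\tau_m(A,\mathfrak a_{\bullet})^{\ell}\subseteq\mathfrak a_{m-e_v}^{\ell}$. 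Now fix any real $m_0>e_v$. Since $m_0-e_v>0$ and $v$ is centered on $A$, the ideal $\mathfrak a_{m_0-e_v}$ lies in $\mathfrak m$, so
\[
\mathfrak a_{k m_0}\subseteq\mathfrak a_{m_0-e_v}^{k}\subseteq\mathfrak m^{k}\qquad\text{for all }k\in\mathbb N .
\]

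Finally I would set $p:=q\,m_0>0$. For every $\ell\in\mathbb N$, the last display with $k=\ell q$ yields $\mathfrak a_{\ell p}=\mathfrak a_{\ell q m_0}\subseteq\mathfrak m^{\ell q}\subseteq\mathfrak b_{\ell\delta}$, which is the Claim. I do not expect a genuine obstacle beyond invoking the right result: all the force sits in Theorem \ref{Theorem-A} (hence in Theorem \ref{Theorem-B}), which turns the trivial inclusion $\mathfrak a_m^{\ell}\subseteq\mathfrak a_{\ell m}$ into a near-converse with a uniform shift $e_v$; the only delicate point is the arithmetic of the levels — one must choose $p$ as the multiple $q\,m_0$ of $m_0$ so that the two filtrations rescale compatibly when $\ell$-th powers are taken.
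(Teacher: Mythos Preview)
Your argument is correct and follows the same route as the paper: funnel $\mathfrak a_{\ell p}$ into a power of $\mathfrak m$ using Theorem~\ref{Theorem-B} and Proposition~\ref{some-properties-asymptotic-test-ideal}(2), then use $\mathfrak m^{k}\subseteq\mathfrak b_{k\cdot(\min w)}$. The paper does this slightly more directly---it chooses $p$ with $\tau_p(\mathfrak a_\bullet)\subseteq\mathfrak m$ (such $p$ exists since otherwise every $\tau_m(\mathfrak a_\bullet)=A$ and Theorem~\ref{Theorem-B} would force $\bigcap_m\mathfrak a_m\neq(0)$) and concludes $\mathfrak a_{\ell p}\subseteq\tau_p(\mathfrak a_\bullet)^\ell\subseteq\mathfrak m^\ell\subseteq\mathfrak b_{\ell\delta}$; your detour through Theorem~\ref{Theorem-A} and the auxiliary constant $q$ evaporates once one reads $\delta$ as the minimum of $w$ on $\mathfrak m\setminus\{0\}$ (the paper's intent, despite the $v$ in the displayed definition), since then $\mathfrak b_\delta=\mathfrak m$ and $q=1$.
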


Assuming the claim, let $C \coloneqq 2n/\delta$, and suppose there exists $x_0 \in \mathfrak m$ such that $\nu(x_0) > Cw(x_0).$
Now choose $\ell \in \mathbb N$ such that
\begin{equation}
\label{Izumi-in}
(\ell - 1)\delta \leq w(x_0) < \ell\delta. 
\end{equation} 
Such an $\ell$ exists by the Archimedean property of $\mathbb{R}$, and moreover, $\ell \geq 2$ since $w(x_0) \geq \delta$. Clearly, $x_0 \notin \mathfrak{b}_{\ell\delta}$, and multiplying (\ref{Izumi-in}) by $C$, we get
$$2(\ell-1)n \leq Cw(x_0) < 2\ell n.$$
But $\ell \geq 2$ implies $\ell n \leq 2(\ell - 1)n \leq Cw(x_0) < \nu(x_0)$. Then $x_0 \in \mathfrak{a}_{\ell n}$, contradicting $\mathfrak{a}_{\ell n} \subseteq \mathfrak{b}_{\ell\delta}$. 

\begin{proof}[\textbf{Proof of Claim \ref{claim}:}]
By our choice of $\delta$, $\mathfrak{b}_{\delta} = \mathfrak m$. 
Thus, for all $\ell \in \mathbb N$, $\mathfrak{m}^{\ell} \subseteq \mathfrak{b}_{\ell\delta}.$
Since by Theorem \ref{Theorem-B} 
\begin{equation}
\label{something}
\bigcap_{m \in \mathbb{R}_{\geq 0}}\big{(}\mathfrak{a}_m: \tau_m(\mathfrak{a}_{\bullet})\big{)} \neq (0),
\end{equation}
there must exist some $n > 0$ such that $\tau_n(\mathfrak{a}_{\bullet}) \subseteq \mathfrak m.$ Otherwise, for all $m \in \mathbb R_{\geq 0}$, $\tau_m(\mathfrak{a}_{\bullet}) = A$, which would imply that any $s \in \bigcap_{m \in \mathbb{R}}\big{(}\mathfrak{a}_m: \tau_m(\mathfrak{a}_{\bullet})\big{)}$ is also an element of
$\bigcap_{m \in \mathbb{R}_{\geq 0}} \mathfrak{a}_m = (0),$
contradicting (\ref{something}). Then by Proposition \ref{some-properties-asymptotic-test-ideal}(2), for all $\ell \in \mathbb N$,
\[\mathfrak{a}_{\ell n} \subseteq \tau_{n}(\mathfrak{a}_{\bullet})^{\ell} \subseteq \mathfrak m^{\ell} \subseteq \mathfrak{b}_{\ell\delta}. \qedhere \]
\end{proof}

\subsection{Examples} We now provide some examples to show that Theorem \ref{Theorem-A}, Theorem \ref{Theorem-B} and Corollary \ref{Corollary-C} are false if the valuation $\nu$ is not Abhyankar. These examples have been taken from the author's dissertation \cite{Dat18}. 

\begin{examples}
{\*}
\begin{enumerate}
\item Uniform approximation of valuation ideals (Theorem \ref{Theorem-A}) \emph{fails} in general for real-valued valuations that are not Abhyankar. The discrete valuation $\nu_{q(t)}$ of $\mathbb{F}_p(X,Y)$ constructed in Example \ref{examples-Abhyankar-valuations}(4) provides a counterexample to Theorem \ref{Theorem-A}. Recall that $\nu_{q(t)}$ is obtained as the composition
$$\mathbb{F}_p(X,Y)^{\times} \hookrightarrow \mathbb{F}_p((t))^{\times} \xrightarrow{t-adic} \mathbb{Z},$$
by mapping $X \mapsto t$ and $Y \mapsto q(t)$ such that $t, q(t)$ are algebraically independent over $\mathbb{F}_p$. We can choose
$$q(t) = a_1t + a_2t^2 + a_3t^3 + \dots,$$
such that $a_1 \neq 0$. Then $\nu_{q(t)}$ is centered on 
$$A \coloneqq \mathbb{F}_p[X,Y]_{(X,Y)}.$$ 
Now for any $m \in \mathbb{N}$, the valuation ideal $\mathfrak{a}_m$ of the center $A$ contains the ideal 
$$(X^m, Y - a_1X + a_2X^2 + \dots + a_{m-1}X^{m-1}).$$
Therefore $A/\mathfrak{a}_m$ has length $\leq m$. 

Suppose there exists $e$ as in Theorem \ref{Theorem-A}. Fixing $m \in \mathbb{N}$ such that 
$$m > e,$$ 
we see that for all $\ell \in \mathbb{N}$, the length of $A/\mathfrak{a}_{\ell m}$ is $\leq \ell m$. In other words, for a fixed $m$, the length of $A/\mathfrak{a}_{\ell m}$ grows as a linear function in $\ell$. On the other hand,
$$\mathfrak{a}^{\ell}_{m - e} \subseteq (X,Y)^{\ell}.$$
Thus the length of $A/\mathfrak{a}^{\ell}_{m-e}$ is at least the length of $A/(X,Y)^{\ell}$, and the latter grows as a quadratic function in $\ell$. Hence $\mathfrak{a}_{\ell m}$ cannot possibly be contained in $\mathfrak{a}^{\ell}_{m-e}$ when $\ell \gg 0$, thereby providing a counterexample to Theorem \ref{Theorem-A}. Since Theorem \ref{Theorem-A} is a formal consequence of Theorem \ref{Theorem-B}, it follows that Theorem \ref{Theorem-B} must also be false for real-valued valuations that are not Abhyankar. 

\item Izumi's theorem (Corollary \ref{Corollary-C}) also fails in general when the valuations $\nu$ and $w$ are not both Abhyankar. To see this, we take one valuation to be the unique valuation $\nu_\pi$ on $\mathbb{F}_p(X,Y)$ such that
$$\nu_\pi(X) = 1 \hspace{1mm} \textrm{ and} \hspace{3mm} \nu_\pi(Y) = \pi.$$
Note $\nu_\pi$ is an Abhyankar valuation of $\mathbb{F}_p(X,Y)/\mathbb{F}_p$ since
$$\td{\mathbb{F}_p(X,Y)/\mathbb{F}_p} = 2 = \dim_{\mathbb Q}(\mathbb{Q} \otimes_{\mathbb Z} \Gamma_{\nu_\pi}).$$
Choose the other valuation to be of the form $\nu_{q(t)}$, where, specifically,
$$q(t) = \sum_{i = 1}^{\infty} t^{i!}.$$
It is not difficult to check that $t, q(t)$ are algebraically independent over $\mathbb{F}_p$ (see also \cite[Chapter VI, $\mathsection 3$, Exercise 1]{Bou89}), so that the valuation $\nu_{q(t)}$ is indeed well-defined. 

Both $\nu_\pi$ and $\nu_{q(t)}$ are centered on $\mathbb{F}_p[X,Y]_{(X,Y)}$. For all $n \in \mathbb{N}$, defining
$$x_n \coloneqq Y - \sum_{i = 1}^n X^{i!},$$
we see that,
$$\nu_{\pi}(x_n) = 1 \hspace{1mm} \textrm{ and} \hspace{3mm} \nu_{q(t)}(x_n) = (n+1)!.$$ 
Clearly there does not exist a fixed real number $C > 0$ such that for all $n \in \mathbb{N}$,
$$\nu_{q(t)}(x_n) = (n+1)! \leq C = C\nu_{\pi}(x_n).$$
Thus, Izumi's theorem fails when the real-valued valuations are not Abhyankar. 
\end{enumerate}
\end{examples}

\section{Acknowledgments}
I thank my advisor, Karen Smith, for her suggestion to pursue this line of research, for her support, and for always generously sharing her ideas. I benefitted from discussions with Karl and Linquan Ma in Utah and later at the 2017 Arizona Winter School. I thank Manuel Blickle, Mel Hochster, Johan de Jong, Hagen Knaf, Tiankai Liu, Takumi Murayama, Axel St\"abler for useful conversations, and Harold Blum for explaining his proof of Theorem \ref{Theorem-A} in characteristic $0$ using log-discrepancies of valuations \cite{Blu18}. In addition, Harold and Takumi's careful reading of a draft has improved this paper's clarity. Finally, I thank the referee for helpful suggestions. 

\bibliographystyle{amsalpha}
\footnotesize

\end{document}